\theoremstyle{plain}
\newtheorem{theorem}{Theorem}[section]
\newtheorem{proposition}[theorem]{Proposition}
\newtheorem{lemma}[theorem]{Lemma}
\newtheorem{condition}[theorem]{\rm Condition}
\theoremstyle{definition}
\newtheorem{definition}[theorem]{\rm Definition}
\newtheorem{remark}[theorem]{\rm Remark}
\newtheorem{example}[theorem]{\rm Example}
\numberwithin{equation}{section}
\newcommand{\field}[1]{\mathbb{#1}}
\newcommand{\C}{\field{C}}
\newcommand{\F}{\field{F}}
\newcommand{\Q}{\field{Q}}
\newcommand{\R}{\field{R}}
\newcommand{\Z}{\field{Z}}
 \DeclareMathOperator{\Hom}{Hom}
\DeclareMathOperator{\sgn}{sgn}
\DeclareMathOperator{\interior}{int}
 \DeclareMathOperator{\id}{id}
\def\Sd{S_d}
\def\HomSG{\Hom(\Sd,G_d)}
\def\HomS1n{\Hom(S^1,(S^1)^n)}
\newcommand{\mathscr}{}
\def\higher{generalized\ }
\def\Higher{Generalized\ }
\title[Quasitoric manifolds]{Quasitoric manifolds over a product of simplices}
\author[S. Choi]{Suyoung Choi}
\address{Department of Mathematical sciences, Korea Advanced
Institute of Science and Technology, 335 Gwahangno, Yuseong-gu, Daejeon 305-701, Republic of Korea} \email{choisy@kaist.ac.kr}
\author[M. Masuda]{Mikiya Masuda}
\address{Department of Mathematics, Graduate School of Science, Osaka City University} \email{masuda@sci.osaka-cu.ac.jp}
\author[D.Y. Suh]{Dong Youp Suh} \address{Department of Mathematical sciences, Korea Advanced
Institute of Science and Technology, 335 Gwahangno, Yuseong-gu, Daejeon 305-701, Republic of Korea} \email{dysuh@math.kaist.ac.kr}
\thanks{The first author was partially supported by the second stage of Brain Korea 21 project, KAIST in 2007,
the second author was partially supported by Grand-in-Aid for Scientific Research 19204007, and the
third author was partially supported by the SRC program of Korea Science and Engineering Foundation R11-2007-035-02002-0.
}
\subjclass{Primary 57S15, 14M25; Secondary 57S25}
\dedicatory{Dedicated to Professor Takao Matumoto on his sixtieth birthday}
\date{\today}
\begin{document}

\begin{abstract}
A quasitoric manifold (resp. a small cover) is a $2n$-dimensional
(resp. an $n$-dimensional) smooth closed manifold with an effective
locally standard  action of $(S^1)^n$ (resp.
$(\mathbb Z_2)^n$) whose orbit space is combinatorially an
$n$-dimensional simple convex polytope $P$. In this paper we
study them when $P$ is a product of simplices.
A \higher Bott tower over $\F$, where $\F=\C$ or $\R$,
is a sequence of projective bundles of the Whitney sum of $\F$-line bundles
starting with a point.
Each stage of the tower over $\F$, which we call a \higher Bott manifold,
provides an example of quasitoric manifolds (when $\F=\C$) and small covers
(when $\F=\R$) over a product of simplices.
It turns out that every small cover over a product of simplices
is equivalent (in the sense of Davis and Januszkiewicz \cite{DJ})
to a \higher Bott manifold. But this is not the case
for quasitoric manifolds and we show that a quasitoric manifold over
a product of simplices
is equivalent to a \higher Bott manifold if and only if it admits
an almost complex structure left invariant under the action.
Finally, we show that a quasitoric manifold $M$ over a product of simplices
is homeomorphic to a \higher
Bott manifold if $M$ has the same cohomology ring as
a product of complex projective spaces with $\Q$ coefficients.

\end{abstract}

\maketitle

\section{Introduction}\label{Introduction}

Toric varieties in algebraic geometry and Hamiltonian torus actions
on symplectic manifolds exhibit fascinating  relations between the
geometry of algebraic varieties or smooth manifolds and the
combinatorics of their orbit spaces. Considering the success of
toric theory, it is natural to generalize them to the topological
category, and a monumental development in this direction was
obtained by the work of Davis and Januszkiewicz in \cite{DJ}. They
defined a topological generalization of toric variety by the name of
\lq\lq toric manifold", which is a $2n$-dimensional closed manifold
$M$ with a locally standard action of $n$-torus
$G=(S^1)^n$ whose orbit space is combinatorially an $n$-dimensional
simple convex polytope $P$. In this case $M$ is said to be  a \lq\lq
toric manifold" over $P$. They also defined a $\Z_2$-analogue
of a \lq\lq toric manifold" called a small cover, which is an
$n$-dimensional manifold with an effective action of the $\mathbb
Z_2$-torus of rank $n$ with an $n$-dimensional simple polytope as
the orbit space.

Unfortunately the term \lq\lq toric manifolds" is already
well-established among algebraic geometers as \lq\lq non-singular
toric variety". Moreover there are \lq\lq toric manifolds" (in the
sense of Davis an Januszkiewicz) which are not algebraic varieties,
for example $\mathbb CP^2\sharp\, \mathbb CP^2$. Because of this
reason Buchstaber and Panov introduced the term \lq\lq quasitoric
manifold" as an alias for Davis and Januszkiewicz's \lq\lq toric
mani\-fold" in \cite{BP}. In this paper we adopt Buchstaber and
Panov's \lq\lq quasitoric manifold" instead of \lq\lq toric
manifold". We refer the reader to Chapter 5 of \cite{BP} for an
excellent exposition on quasitoric manifolds including their
comparison with (compact non-singular) toric varieties.

This paper is motivated by the work \cite{MP} which investigates quasitoric
manifold over a cube.  A cube is a product of 1-simplices.
We take a product of simplices as the simple polytope
$P$ and describe quasitoric manifolds and small covers over $P$ in terms
of matrices with vectors as entries.
A typical example of quasitoric manifolds or small covers over a product of
simplices appears in a sequence of projective bundles
\[
\xymatrix{ B_m\ar[r]^-{\pi_m}& B_{m-1}\ar[r]^-{\pi_{m-1}}&
\cdots\ar[r]^-{\pi_2}& B_1\ar[r]^-{\pi_1}& B_0}=\{\textrm{a point}\},
\]
where $B_i$ for $i=1,\dots,m$ is the projectivization of the Whitney sum of
$n_i+1$ $\F$-line bundles over $B_{i-1}$ ($\F=\C$ or $\R$).
Grossberg-Karshon \cite{gr-ka94} considered the sequence above when $\F=\C$
and $n_i=1$ for any $i$, and they named it a {\em Bott tower}.
Motivated by this, we call the sequence above
a {\em  \higher Bott tower} (over $\F$).
The $j$-stage $B_j$ of the tower provides
a quasitoric manifold (when $\F=\C$) and a small cover (when $\F=\R$)
over $\prod_{i=1}^j\Delta^{n_i}$ where $\Delta^{n_i}$ is the $n_i$-simplex.
We call each $B_j$ a {\em \higher Bott manifold} (over $\F$) and
especially call it a {\em Bott manifold} when the tower is a Bott tower.
It turns out that any small cover over a product of simplices
is equivalent (in particular, homeomorphic)
to a \higher Bott manifold (over $\R$) (see Remark~\ref{difference})
but this is not the case for quasitoric manifolds.  We give a necessary
and sufficient condition for a quasitoric manifold over a product of simplices
to be equivalent to a \higher Bott manifold (over $\C$)
(see Theorem~\ref{theorem:complex Bott manifold}),
where a part of the statement is a particular case of
\cite[Theorem 6]{Do}.

This paper is organized as follows.
In Section 2 we recall general facts on quasitoric manifolds and small covers
over a simple polytope.  From Section 3 we restrict our concern to
a product of simplices as the simple polytope and treat only quasitoric
manifolds because small covers can be treated similarly.
In Section 3 we introduce some notation needed for later discussion
and associate a matrix with vectors as entries to a quasitoric manifold
over a product of simplices.
In Section 4 we describe quasitoric manifolds over a product of simplices as
the orbit space of a product of odd dimensional
spheres by some free torus action. This is done in \cite{gr-ka94}
and \cite{ci-ra05} when the orbit space is a product of 1-simplices, that is,
a cube.
The association of the matrix with vectors as entries to a quasitoric
manifold over a product of simplices depends on the order of the
product of the simplices.  We discuss this in Section 5.
\Higher Bott towers are introduced in Section 6 and \higher Bott manifolds are
characterized among quasitoric manifolds over a product of simplices
(Theorem~\ref{theorem:complex Bott manifold}).
In Section 7 we explicitly describe the cohomology ring of a quasitoric
manifold over a product of simplices and prove in Section 8 that such
a quasitoric manifold is homeomorphic to a \higher Bott manifold if it has the
same cohomology ring as a product of complex
projective spaces with $\Q$ coefficients.

\section{General facts} \label{sect:2}

An $n$-dimensional convex polytope $P$ is said to be
{\em simple} if precisely $n$
facets (namely codimension-one faces of $P$) meet at each vertex.
Equivalently, $P$ is simple if the dual of the boundary complex
$\partial P$ of $P$ is a simplicial complex. It is clear that every
simplex is simple and a product of simple convex polytopes is simple.
Therefore a product of simplices is simple.

Let $d=1$ or $2$.  We denote by $\Sd$ an order two group $S^0$ when $d=1$ and
a circle group $S^1$
when $d=2$, and by $G_d$ a group isomorphic to $(\Sd)^n$.
A $dn$-dimensional smooth $G_d$-manifold $M_d$ with a projection
$\pi\colon M_d\to P$ is called a {\em small cover} (when $d=1$) and
a {\em quasitoric manifold} (when $d=2$) over an  $n$-dimensional simple
convex polytope $P$ if $M_d$ is locally isomorphic to a faithful real
$dn$-dimensional
representation of $G_d$ and each fiber of $\pi$ is a $G_d$-orbit.
The orbit space $M_d/G_d$ can be identified with $P$.
Two quasitoric manifolds or small covers $\pi\colon M_d\to P$ and
$\pi'\colon M_d'\to P$
are {\em equivalent (in the sense of Davis and Januszkiewicz)} if
there is a homeomorphism $f\colon M_d\to M_d'$ covering the identity on $P$
and an automorphism $\theta:G_d\to G_d$ such that $f$ satisfies
$\theta$-equivariance, i.e., $f(gm)=\theta(g)f(m)$ for all $m\in
M_d$ and $g\in G_d$. Note that the equivalence is neither weaker nor
stronger than $G_d$-homeomorphism, because any $G_d$-homeomorphism
must satisfy $\theta$-equivariance with $\theta=\id$, but it may not
cover the identity on the orbit space.

Let $\pi\colon M_d\to P$ be a small cover or a quasitoric manifold and
let $\mathcal F$ be the set of facets of $P$.
If $F\in \mathcal F$, then the isotropy subgroup of a point $x\in
\pi^{-1}(\interior F)$ is independent of the choice of $x$, and is a
rank-one subgroup $G_d(F)$ of $G_d$.
The group $\HomSG$ of homomorphisms from $\Sd$ to $G_d$ is isomorphic
to $(R_d)^n$ where $R_d$ is $\mathbb Z/2$
when $d=1$ and $\mathbb Z$ when $d=2$.
Each rank-one subgroup
of $G_d$ corresponds uniquely (up to sign) to a primitive vector
of $\HomSG$ which generates a rank-one direct summand of
$\HomSG$. Therefore every $M_d$ defines what is called the {\em characteristic
function} of $M_d$
$$\lambda\colon \mathcal F \to \HomSG$$
such that the image of $F\in \mathcal F$ is a primitive vector of
$\HomSG$ corresponding to the rank-one subgroup $G_d(F)$.
When $d=1$, such a primitive vector is unique for each $F$,
but sign ambiguity arises when $d=2$. This sign ambiguity can be resolved
if an omniorientation (see \cite{BP}) is assigned to a quasitoric manifold
$M_d$, in particular if $M_d$ admits an almost complex structure left
invariant under the action (see Lemma 1.5 and 1.10 of \cite{Ma}).
In any case, the characteristic function $\lambda$ of $M_d$
must satisfy the following condition, see \cite{DJ}.

\begin{condition}\label{condition}
If n facets $F_1,\ldots, F_n$ of $P$ intersect at a vertex, then their
images $\lambda(F_1),\ldots,\lambda(F_n)$ must form a basis of
$\HomSG$.
\end{condition}

Conversely, for a function $\lambda\colon \mathcal F\to \HomSG$
satisfying Condition~\ref{condition}, there exists a unique
(up to equivalence) small cover
(when $d=1$) and quasitoric manifold (when $d=2$) with $\lambda$ as the
characteristic function, see \cite{DJ} or \cite{bu-pa-ra06} for details.
Therefore in order to
classify all small covers or quasitoric manifolds over a simple convex
polytope $P$, it is necessary and sufficient to understand the functions
$\lambda$ satisfying Condition~\ref{condition}.

Let $F_1,\dots,F_k$ be the all facets of $P$ and let $\omega_1,\dots,\omega_k$
be the indeterminates corresponding to the facets.
Then it is shown in \cite{DJ} that the equivariant cohomology ring
$H^{\ast}_{G_d}(M_d;R_d)$ is
the face ring (or the Stanley-Reisner ring) of $P$ with $R_d$
coefficient as graded rings, that is,
\begin{equation}\label{equation:equivariant cohomology}
H^{\ast}_{G_d}(M_d;R_d)=R_d[\omega_1,\ldots, \omega_k]/I,
\end{equation}
where the degree of $\omega_i$ is $d$ for each $i$
and $I$ is the homogeneous ideal of the polynomial ring $R_d[\omega_1,\ldots,
\omega_k]$ generated by all
square-free monomials of the form $\omega_{i_1}\cdots \omega_{i_s}$ such
that the intersection of the corresponding facets $F_{i_1}, \ldots, F_{i_s}$
is empty.

We choose a basis of $\HomSG$ and identify $\HomSG$ with $(R_d)^n$.
We form a $k\times n$ matrix whose $i$-th row is $\lambda(F_i)\in (R_d)^n$,
i.e.,
\begin{equation} \label{equation:lambda}
(\lambda_{ij})=\left(\begin{array}{c}
\lambda(F_1)\\
\vdots\\
\lambda(F_k)\end{array}\right).
\end{equation}
Let $\lambda_j=\lambda_{1j}\omega_1+\cdots+\lambda_{kj}\omega_k$, and let $J$ be the ideal of  $R_d[\omega_1,\ldots, \omega_k]$
generated by $\lambda_j$ for $j=1,\ldots, n$.
Then we have
\begin{equation}\label{equation:cohomology ring}
H^{\ast}(M_d;R_d)=R_d[\omega_1,\ldots, \omega_k]/(I+J).
\end{equation}

\begin{remark}
In general it would be natural to
use a \emph{column} vector to express $\lambda(F_i)$
(see \cite{BP}), but then, as noticed in \cite{MP}, we need to
take a transpose of a matrix at some point to adjust our description to
the notation used in \cite{ci-ra05} and \cite{gr-ka94}. Therefore
we will use a \emph{row} vector to express $\lambda(F_i)$ in this paper.
\end{remark}

As is seen above, most of the arguments for quasitoric manifolds work
for small covers with $S^1$ and $\Z$ replaced by $S^0$ and $\Z/2$
respectively.
In fact, the study of small covers is a bit simpler than that of
quasitoric manifolds in our case.
So we shall treat only quasitoric manifolds throughout this paper.
The main difference between quasitoric manifolds and small covers
in our arguments is stated in Remark~\ref{difference}, so that the arguments
after Section~\ref{sect:cohom} are unnecessary for small covers.

\section{Vector matrices}
\label{sect:3} From now on, we take
$$P=\prod_{i=1}^m\Delta^{n_i}, \qquad\text{with}\quad \sum_{i=1}^m n_i=n,$$
where $\Delta^{n_i}$ is the $n_i$-simplex for $i=1,\ldots m$.
Let $\{ v^i_0,\ldots,v^i_{n_i}\}$ be the set of vertices of the
simplex $\Delta^{n_i}$. Then each vertex of $P$ is the product of
vertices of $\Delta^{n_i}$'s for $i=1,\ldots,m$, hence the set of vertices of $P$ is
$$\{v_{j_1\ldots j_m}=v^1_{j_1}\times\cdots\times v^m_{j_m}\mid
0\le j_i\le n_i\}.$$
Each facet of $P$ is the product of a codimension-one face  of one of
$\Delta^{n_i}$'s and the remaining simplices. Therefore the set of
facets of $P$ is
$$\mathcal F=\{F^i_{k_i}\mid 0\le k_i\le n_i \quad  i=1,\ldots, m\}$$
where
$F^i_{k_i}=\Delta^{n_1}\times\cdots\times\Delta^{n_{i-1}}\times
f^i_{k_i}\times\Delta^{n_{i+1}}\times\cdots\times\Delta^{n_m}$, and
$f^i_{k_i}$ is the codimension-one face of the simplex
$\Delta^{n_i}$ which is opposite to the vertex $v^i_{k_i}$. Hence
there are $\sum_{i=1}^m(n_i+1)=n+m$ facets in $P$. Since $P$ is
simple, exactly $n$ facets meet at each vertex. Indeed, at each
vertex $v_{j_1\ldots j_m}$ of $P$ all $n$ facets in $\mathcal
F-\{F^i_{j_i}\mid i=1,\ldots , m\}$ intersect, in particular,
the $n$ facets
in the set $$\mathcal F - \{F_0^i\mid i=0,\ldots,
m\}=\{F^1_1,\ldots,F^1_{n_1},\ldots,F^m_1\ldots,F^m_{n_m}\}$$
intersect at the vertex $v_{0\ldots0}$.

Let $\lambda\colon \mathcal F\to \HomS1n$ be the characteristic function of a
quasitoric manifold over $P$. By Condition~\ref{condition}, $n$ vectors
\begin{equation} \label{basis}
\lambda(F^1_1),\ldots,\lambda(F^1_{n_1}),\ldots,\lambda(F^m_1)\ldots,
\lambda(F^m_{n_m})
\end{equation}
form a basis of $\HomS1n$ and we identify $\HomS1n$ with $\Z^n$ through
this basis.  Then the vectors in (\ref{basis}) correspond
to the standard basis elements
$$\mathbf e_1=(1,0,\ldots,0), \ldots,  \mathbf e_n= (0,\ldots,0,1)$$
in the given order. For the remaining $m$ facets $F^i_0$, we set
\[
\lambda(F^i_0)=\mathbf a_i\in \Z^n \quad\text{for  $i=1,\ldots, m$.}
\]
In this way, to the characteristic function $\lambda$ of a quasitoric manifold
over $P$ we have a corresponding $m\times n$ matrix
$$ A=\left(\begin{array}{c}
    \mathbf a_1\\
    \vdots\\
    \mathbf a_m
    \end{array} \right ),\ \textrm{where}\ \mathbf a_i\in \Z^n.
$$

Each row vector $\mathbf a_i$ can be written as
\begin{eqnarray*}
\mathbf a_i & =&(\mathbf a^1_i,\ldots \mathbf a^j_i,\ldots,\mathbf a^m_i)\\
&=&([a^1_{i 1},\ldots,a^1_{in_1}],\ldots,
[a^j_{i1},\ldots,a^j_{in_j}],\ldots,[a^m_{i1}\ldots,a^m_{in_m}])\end{eqnarray*}
where $\mathbf a^j_i=[a^j_{i1},\ldots,a^j_{in_j}]\in \Z^{n_j}$
for $j=1,\ldots, m$. Therefore we may write
\begin{equation}\label{equation:matrix A}
\begin{array}{l l l}
A&=& \left( \begin{array}{c}
    \mathbf a_1\\
    \vdots\\
    \mathbf a_m
    \end{array} \right)
    =
    \left( \begin{array}{c c c}
    \mathbf a^1_1 & \ldots & \mathbf a_1^m\\
    \vdots & \ldots & \vdots\\
    \mathbf a_m^1 & \ldots & \mathbf a_m^m
    \end{array}\right)\\
& = &   \left(\begin{array}{c c c c c c c}
    a^1_{11} & \ldots & a^1_{1n_1} & \ldots & a^m_{11} & \ldots & a^m_{1n_m}\\
    \vdots &&&&&&\vdots\\
    a^1_{m1} & \ldots & a^1_{mn_1} & \ldots & a^m_{m1} & \ldots &
    a^m_{mn_m}
    \end{array}\right)
    \end{array}
\end{equation}
with $\mathbf a^j_i\in \Z^{n_j}$ for all $i=1,\ldots,m$.
In other words, the $m\times n$ matrix $A$ can be viewed as an $m\times m$
matrix whose entries in the $j$-th column are vectors in $\Z^{n_j}$. >From now on, we shall view the matrix $A$ this way and call it
a {\em vector matrix}.

Since the characteristic function $\lambda$ satisfies
Condition~\ref{condition}, we need to translate this into a condition on the
corresponding matrix $A$. For this let us fix some more notation.
For given $1\le k_j\le n_j$ with $j=1,\ldots, m$, let $A_{k_1\ldots
k_m}$ be the  $m\times m$ submatrix of $A$ whose $j$-th column is
the $k_j$-th column of the $m\times n_j$ matrix
$$\left(\begin{array}{c}\mathbf a^j_1\\ \vdots \\ \mathbf a^j_m\end{array}
\right)=
\left(\begin{array}{c c | c | c c}
a^j_{11}&\ldots & \overline{ a^j_{1k_j} }& \ldots &  a^j_{1n_j} \\
\vdots& &\vdots &  & \vdots\\
a^j_{m1}&\ldots &\underline{a^j_{mk_j}} & \ldots & a^j_{mn_j}\\
\end{array}\right).$$
Thus
$$A_{k_1\ldots k_m}=\left(\begin{array}{c c c}
a^1_{1k_1} & \ldots & a^m_{1k_m}\\
\vdots & & \vdots\\
a^1_{mk_1}&\ldots & a^m_{mk_m}
\end{array}\right).$$

\begin{example} \label{example:triangular cylinder}
Let $P=\Delta^2\times \Delta^1$ be a triangular cylinder.  Let $\{v^1_0,
v^1_1, v^1_2\}$ be the vertices of $\Delta^2$ and $\{v^2_0, v^2_1\}$
the vertices of $\Delta^1$. Then
$$\{v_{00}, v_{10}, v_{20}, v_{01},v_{11}, v_{21}\}$$
is the vertex set of $P$ where $v_{ij}=v^1_i\times
v^2_j$. The set of facets of $P$ is
$$\{ F^1_0, F^1_1, F^1_2, F^2_0,F^2_1\}$$
where $F^1_i=f^1_i\times \Delta^1$ for $i=0,1,2$ are the side
rectangles and $F^2_j=\Delta^2\times f^2_j$ for $j=0,1$ are the top
and bottom triangles. The characteristic function $\lambda\colon
\mathcal F\to \mathbb Z^3$ is assigned as
follows:
$$\lambda(F^1_0)=\mathbf a_1,\ \lambda(F^1_1)=\mathbf e_1,\
\lambda(F^1_2)=\mathbf e_2$$
$$ \lambda(F^2_0)=\mathbf a_2,\ \lambda(F^2_1)=\mathbf e_3.$$

The corresponding $2\times 3$ matrix $A$ is

\begin{eqnarray*}
A&=  &\left(\begin{array}{c} \mathbf a_1\\ \mathbf a_2\end{array} \right) \\
&=   &\left(\begin{array}{cc}
\mathbf a^1_1 & \mathbf a^2_1\\
\mathbf a^1_2 & \mathbf a^2_2\end{array}\right)\quad \textrm{as a $2\times 2$ vector matrix}\\
&=   &\left(
\begin{array}{cc}
 a^1_{11} & a^1_{12}\\
 a^1_{21} & a^1_{22}\end{array}
 \begin{array}{c}
 a^2_{11}\\
 a^2_{21}\end{array}
 \right)
 \end{eqnarray*}
Thus the $2\times 2$ submatrices $A_{11}$ and $A_{21}$ are as
follows:
$$A_{11}= \left(\begin{array}{cc}
a^1_{11} & a^2_{11}\\
a^1_{21} & a^2_{21}\end{array}\right), \qquad
A_{21}=\left(\begin{array}{cc}
a^1_{12} & a^2_{11}\\
a^1_{22} & a^2_{21}\end{array}\right).$$

Condition~\ref{condition} at a vertex, say $v_{21}$, can be translated as
follows: since the facets $F^1_0, F^1_1$ and $F^2_0$ intersect at
$v_{21}$
\begin{eqnarray*}
\det\left(\begin{array}{c}
\mathbf e_1 \\
\mathbf a_1\\
\mathbf a_2\end{array}\right)
& = & \det\left(\begin{array}{ccc}
1 & 0 & 0 \\
a^1_{11} & a^1_{12} & a^2_{11} \\
a^1_{21} & a^1_{22} & a^2_{21}\end{array}\right)\\
& = & \det A_{21}=\pm1
\end{eqnarray*}
Similarly Condition~\ref{condition} at $v_{01}$ is equivalent to
$a^2_{21}=\pm1$, and that at $v_{20}$ is equivalent to $a^1_{12} =\pm1$.
These conditions are equivalent to
the condition that all principal minors of $A_{21}$ (including
the determinant of $A_{21}$ itself) are $\pm1$. Similarly
Condition~\ref{condition} at other vertices is equivalent to all principal
minors of $A_{11}$ being $\pm1$.
\end{example}

The last statement in Example~\ref{example:triangular cylinder} holds
in general.  A {\em principal minor}
of an $m\times m$ vector matrix $A$ of the form
(\ref{equation:matrix A}) means a principal minor
of an $m\times m$ matrix $A_{j_1\ldots j_m}$
for some $ 1\le j_1\le n_1,\ldots,1\le j_m\le n_m$
where the determinant of $A_{j_1\ldots j_m}$ itself is understood to be
a principal minor of $A_{j_1\ldots j_m}$.

\begin{lemma} \label{lemma:principal minor}
Let $P=\prod_{i=1}^m\Delta^{n_i}$.  If an $m\times m$ vector
matrix $A$ of the form (\ref{equation:matrix A}) is associated with
the characteristic function $\lambda$ of a quasitoric manifold over $P$, then
Condition~\ref{condition} for
$\lambda$ at all vertices of $P$ is equivalent to all principal minors of $A$
being $\pm 1$.
\end{lemma}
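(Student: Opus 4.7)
The plan is to set up, for each vertex of $P$, an explicit bijection between Condition~\ref{condition} at that vertex and one specific principal minor of the vector matrix $A$, in such a way that every principal minor arises from exactly one vertex. The block-triangular structure of the characteristic matrix at a vertex will do all the work.

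First I would fix an arbitrary vertex $v_{j_1\ldots j_m}$ with $j_i\in\{0,1,\ldots,n_i\}$, and partition the index set into
$$S=\{i\mid j_i=0\},\qquad T=\{i\mid j_i\neq 0\}.$$
The $n$ facets meeting at $v_{j_1\ldots j_m}$ are $\{F^i_k : k\in\{0,\ldots,n_i\}\setminus\{j_i\},\ i=1,\ldots,m\}$. Their characteristic vectors split into two groups: for $i\in T$ the vector $\lambda(F^i_0)=\mathbf a_i$ is present while the standard basis vector $\mathbf e_{N_{i-1}+j_i}$ (with $N_{i-1}=n_1+\cdots+n_{i-1}$) is excluded; for all other included facets one gets the standard basis vectors $\mathbf e_{N_{i-1}+k}$ with $k\neq j_i$. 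Thus $n-|T|$ rows of the $n\times n$ characteristic matrix are standard basis vectors covering exactly the columns $\{1,\ldots,n\}\setminus\{N_{i-1}+j_i:i\in T\}$, and $|T|$ rows are the vectors $\mathbf a_i$ for $i\in T$.

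Next I would compute the determinant by permuting rows and columns so that the $|T|$ vectors $\mathbf a_i$ occupy the top rows and the $|T|$ ``uncovered'' columns $N_{i'-1}+j_{i'}$ for $i'\in T$ come first. Because each standard basis vector has a zero in every uncovered column, the resulting matrix is block lower-triangular. The bottom-right block is a permutation of an identity matrix, hence has determinant $\pm 1$, and the top-left block is the $|T|\times|T|$ matrix whose $(i,i')$-entry is the $(N_{i'-1}+j_{i'})$-th coordinate of $\mathbf a_i$, namely $a^{i'}_{i,j_{i'}}$. So Condition~\ref{condition} at $v_{j_1\ldots j_m}$ reduces to
$$\det\bigl(a^{i'}_{i,j_{i'}}\bigr)_{i,i'\in T}=\pm 1,$$
which, by the definition given just before the lemma, is precisely the principal minor of $A_{j'_1\ldots j'_m}$ with index set $T$, where $j'_i=j_i$ for $i\in T$ and $j'_i\in\{1,\ldots,n_i\}$ is arbitrary for $i\in S$ (the entries of the submatrix do not see this choice).

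Finally I would point out that the assignment $v_{j_1\ldots j_m}\mapsto(A_{j'_1\ldots j'_m},T)$ is a bijection between vertices of $P$ and the data $(j'_1,\ldots,j'_m;T)$ parametrising principal minors of $A$: given any principal submatrix of $A_{j_1\ldots j_m}$ with index set $T$ (and $1\le j_i\le n_i$), the corresponding vertex is obtained by replacing $j_i$ with $0$ for $i\notin T$. Under this correspondence the ``Condition~\ref{condition} at every vertex'' translates to ``every principal minor is $\pm 1$'', proving the lemma. I expect no serious obstacle; the only delicate point is bookkeeping the sign of the row/column permutation and verifying that choices of $j'_i$ for $i\in S$ do not alter the principal submatrix.
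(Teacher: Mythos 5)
Your proof is correct and takes essentially the same approach as the paper: at each vertex the $n\times n$ matrix of characteristic vectors is block-triangular (after permuting rows and columns), and its determinant equals, up to sign, the principal minor of $A_{j_1\ldots j_m}$ indexed by $T=\{i\mid j_i\neq 0\}$. The paper states this more tersely by referring back to Example~\ref{example:triangular cylinder}, whereas you spell out the block decomposition and the vertex--minor bijection explicitly; the only quibble is that with your arrangement (standard basis rows on the bottom, uncovered columns first) the matrix is block \emph{upper}-triangular rather than lower-triangular, but this does not affect the determinant computation.
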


\begin{proof} The basic idea of the proof is same as in Example~\ref{example:triangular cylinder}.
Indeed, at a vertex $v_{j_1\ldots j_m}$ of $P$
all $n$ facets in $\mathcal F'=\mathcal F-\{F^i_{j_i}\mid i=1,\ldots m\}$
intersect. Hence Condition~\ref{condition} at $v_{j_1\ldots j_m}$ is equivalent to the determinant of the $n\times n$ matrix having $\lambda(F)$ as its row vectors for all $F\in\mathcal F'$ being $\pm 1$. But this determinant is nothing but a principal minor of the
$m\times m$ matrix $A_{j_1\ldots j_m}$ up to sign. Therefore the lemma follows.
\end{proof}

\begin{remark}\label{coro:diagonal vectors are 1}
It follows from the lemma above that each component $a^i_{ij}$ in the diagonal
entry vector $\mathbf a^i_i=(a^i_{i1},\ldots,a^i_{in_i})$ of the matrix $A$,
see (\ref{equation:matrix A}), is $\pm 1$ for $j=1,\ldots,n_i$.
The characteristic function $\lambda$ is defined up to sign and if we
change the sign of a vector $\lambda(F^j_k)$ in (\ref{basis})
(say $\lambda(F^j_k)=\mathbf e_\ell$), then the column
vector corresponding to $\lambda(F^j_k)$ (the $\ell$-th column)
changes the sign;
so we can always arrange $a^i_{i,j}=1$ for $i=1, \ldots, m$ and
$j=1, \ldots, n_i$, i.e., $\mathbf a^i_i=(1,\dots,1)$ by
an appropriate choice of signs of the vectors in (\ref{basis}).
In the following we always take $\mathbf a^i_i=(1,\dots,1)$ for $i=1,\dots,m$
for the matrix $A$ associated with a quasitoric manifold unless otherwise
stated.
\end{remark}

\section{Quotient construction}

It is known that any quasitoric manifold over a simple polytope
is realized
as the orbit space of the moment-angle manifold of the polytope
by some free torus action, see \cite{BP} and \cite{bu-pa-ra06}.
When the polytope is $\prod_{i=1}^m \Delta^{n_i}$,
the moment-angle manifold is the product $\prod_{i=1}^m S^{2n_i+1}$ of
odd dimensional spheres.  In this section we shall describe the
free torus action on it explicitly.
We remark that the case where $n_i=1$ for all $i$ (i.e., the polytope
is an $m$-cube) is treated in \cite{gr-ka94} and \cite{ci-ra05}.

\begin{lemma}\label{lemma:exponential equation}
If $C=(c_{ij})$ is a unimodular matrix of size $m$,
then the system of equations
$$z_1^{c_{i1}}\cdots z_m^{c_{im}}=1, \qquad  for \quad i=1,\ldots, m$$
has a unique solution $z_1=\cdots=z_m=1$ in $S^1\subset\mathbb C$.
\end{lemma}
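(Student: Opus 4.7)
The plan is to interpret the system of equations as the kernel of a group homomorphism between tori, and then use that unimodularity makes this homomorphism an isomorphism.

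Concretely, I would define the homomorphism
\[
\phi_C\colon (S^1)^m \longrightarrow (S^1)^m, \qquad
(z_1,\dots,z_m) \longmapsto \Bigl(\prod_{j=1}^m z_j^{c_{1j}},\ \dots,\ \prod_{j=1}^m z_j^{c_{mj}}\Bigr).
\]
The solution set of the given system is precisely $\Ker \phi_C$, so it suffices to show $\phi_C$ is injective.

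The key observation is that $\Hom(S^1, S^1) \cong \Z$, and the functor $\Hom(\,\cdot\,, S^1)$ (or equivalently $\Hom(S^1, \,\cdot\,)$) turns composition of torus homomorphisms into matrix multiplication of integer matrices. Since $C$ is unimodular, $\det C = \pm 1$, so $C^{-1}$ exists and also has integer entries. Hence the homomorphism $\phi_{C^{-1}}$ is well defined and $\phi_{C^{-1}} \circ \phi_C = \phi_{C^{-1}C} = \phi_I = \id$. This forces $\phi_C$ to be injective (indeed, an isomorphism), giving the claim.

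Alternatively, one can work in coordinates: writing $z_j = e^{2\pi i \theta_j}$ with $\theta_j \in \R/\Z$, the system becomes $C\theta \equiv 0 \pmod{\Z^m}$, i.e.\ $C\theta \in \Z^m$ when $\theta$ is lifted to $\R^m$. Multiplying by $C^{-1}$ (which has integer entries by unimodularity) gives $\theta \in \Z^m$, so each $\theta_j \equiv 0 \pmod{\Z}$ and therefore $z_j = 1$.

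There is no real obstacle here; the content of the lemma is exactly that unimodularity of $C$ is equivalent to the integer linear map $C\colon \Z^m \to \Z^m$ being invertible, which dualizes to $\phi_C$ being an automorphism of the torus. The only thing one needs to take care of is the direction of the duality (rows versus columns) so that the exponent matrix of $\phi_C$ matches $C$ rather than its transpose, but this is a matter of convention and does not affect the final statement.
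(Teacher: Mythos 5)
Your proof is correct. The ``alternatively'' paragraph is essentially identical to the paper's own proof: write $z_j = e^{2\pi\sqrt{-1}\,\theta_j}$, observe that the system says $C\theta \in \Z^m$, and use that $C^{-1}$ is an integer matrix (by unimodularity) to conclude $\theta \in \Z^m$, hence $z_j = 1$. Your first argument is the same content phrased functorially --- viewing $C\mapsto\phi_C$ as a homomorphism from $\GL_m(\Z)$ to $\Aut\bigl((S^1)^m\bigr)$, so that $\phi_{C^{-1}}$ is an explicit two-sided inverse of $\phi_C$ --- which is a clean conceptual repackaging but does not differ in substance from the coordinate computation.
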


\begin{proof}
Write $z_j=\exp(2\pi\theta_j\sqrt{-1})$ with $\theta_j\in \mathbb R$
for $j=1,\dots,m$. Then the equations in the lemma are equivalent to
\begin{equation*}\label{log equation}
c_{i1}\theta_1 + \cdots +c_{im}\theta_m=k_i
\qquad\textrm{ for }\quad i=1,\ldots, m
\end{equation*}
for some $k_i\in \mathbb Z$. Since $C$ is unimodular and $k_i$'s are integers,
$\theta_j$'s are also integers, which means $z_j=1$ for $j=1,\ldots, m$.
\end{proof}

Let $A$ be an $m\times m$ vector matrix in (\ref{equation:matrix A}). We
construct a quasitoric manifold $M(A)$ with $A$ as its corresponding
matrix. Consider the subspace $X=\prod_{i=1}^m S^{2n_i+1}$ of
$\prod_{i=1}^m\mathbb C^{n_i+1}$, which is the moment-angle manifold of
$\prod_{i=1}^m\Delta^{n_i}$.
Let $K=(S^1)^m$ and define an action of $K$ on $X$ by
\begin{eqnarray}\label{equation:K-action}
\lefteqn{(g_1,\ldots, g_m)\cdot((z^1_0,\ldots,z^1_{n_1}),\ldots, (z^m_0,\ldots,z^m_{n_m}))=}\\
&&((g_1z^1_0,(g_1^{a^1_{11}}\cdots g_m^{a^1_{m1}})z^1_1,\ldots, (g_1^{a^1_{1n_1}}\cdots g_m^{a^1_{mn_1}})z^1_{n_1}),
\ldots\nonumber\\
&&\ldots,(g_mz^m_0,(g_1^{a^m_{11}}\cdots g_m^{a^m_{m1}})z^m_1,\ldots, (g_1^{a^m_{1n_m}}\cdots g_m^{a^m_{m n_m}})z^m_{n_m}))\nonumber
\end{eqnarray}
where $(g_1,\ldots,g_m)\in K$ and $(z^i_0,\ldots, z^i_{n_i})\in S^{2n_i+1}\subset \mathbb C^{n_i+1}$ for $i=1,\dots,m$.

\begin{lemma}\label{lemma:free action of K}
The action of $K$ on $X$ defined in (\ref{equation:K-action}) is free if all principal minors of $A$ are equal to $\pm 1$.
\end{lemma}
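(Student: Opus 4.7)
The plan is to fix a point
$x=((z^1_0,\ldots,z^1_{n_1}),\ldots,(z^m_0,\ldots,z^m_{n_m}))\in X$
and an element $(g_1,\ldots,g_m)\in K$ that fixes $x$, and then to deduce that
$g_1=\cdots=g_m=1$. Since each factor $(z^i_0,\ldots,z^i_{n_i})$ lies on
$S^{2n_i+1}$, at least one of its coordinates is nonzero. I would set
$I=\{i\mid z^i_0\neq 0\}$, and for each $i\notin I$ pick an index
$k_i\in\{1,\ldots,n_i\}$ with $z^i_{k_i}\neq 0$ (for $i\in I$, pick any
$k_i\in\{1,\ldots,n_i\}$ merely to fix notation). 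Reading off
(\ref{equation:K-action}), the fixing condition on the $z^i_0$-coordinate
for $i\in I$ gives $g_i=1$, while the condition on the $z^i_{k_i}$-coordinate
for $i\notin I$ yields
\[
\prod_{\ell=1}^{m} g_\ell^{a^i_{\ell k_i}}=1\qquad(i\notin I).
\]

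After substituting $g_i=1$ for $i\in I$, the remaining equations become
$\prod_{\ell\in I^c}g_\ell^{a^i_{\ell k_i}}=1$ for $i\in I^c$, a system
in the variables $\{g_\ell\}_{\ell\in I^c}$ whose exponent matrix is
$(a^i_{\ell k_i})_{i,\ell\in I^c}$. This matrix is the transpose of the
principal submatrix of $A_{k_1\ldots k_m}$ obtained by keeping the rows
and columns indexed by $I^c$, so its determinant equals the corresponding
principal minor of $A_{k_1\ldots k_m}$, which by definition is a principal
minor of $A$, and hence equals $\pm 1$ by hypothesis. The reduced matrix
is therefore unimodular, so Lemma~\ref{lemma:exponential equation} applies
and forces $g_\ell=1$ for all $\ell\in I^c$, completing the argument.

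The main step to verify carefully is the identification of the exponent
matrix of the reduced system with (the transpose of) a principal submatrix
of $A_{k_1\ldots k_m}$; once this bookkeeping is in place, the hypothesis
on principal minors of $A$ delivers exactly the unimodularity needed to
invoke Lemma~\ref{lemma:exponential equation}, and no further obstacle
remains.
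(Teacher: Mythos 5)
Your proof is correct and takes essentially the same approach as the paper: reduce the fixing condition to an exponential system over the indices $i$ with $z^i_0=0$ after using $z^i_0\ne 0$ to force $g_i=1$, then invoke Lemma~\ref{lemma:exponential equation} via a principal minor of $A$. Your set-theoretic bookkeeping with $I$ and $I^c$ (and the explicit remark that the exponent matrix is the \emph{transpose} of the principal submatrix of $A_{k_1\ldots k_m}$, which has the same determinant) is in fact a little cleaner than the paper's write-up, which appears to swap the roles of the indices $\{1,\dots,s\}$ and $\{s+1,\dots,m\}$ in its reduction step.
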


\begin{proof}
To prove that the action is free we have to show that the equation
\begin{eqnarray}\label{equation:fixed point condition}
\lefteqn{(g_1,\ldots, g_m)\cdot((z^1_0,\ldots,z^1_{n_1}),\ldots, (z^m_0,\ldots,z^m_{n_m}))
}\\
& & =((z^1_0,\ldots,z^1_{n_1}),\ldots, (z^m_0,\ldots,z^m_{n_m}))\nonumber
\end{eqnarray}
implies $g_1=\cdots=g_m=1$.
Since $(z^i_0,\ldots,z^i_{n_i})\in S^{2n_i+1}$, at least one component,
say $z^i_{j_i}$, is nonzero for every $i=1,\ldots, m$.
If $z^i_0=0$ for all $i=1,\ldots, m$, then  equation (\ref{equation:fixed point condition})
implies that $g_1^{a^i_{1 j_i}}\cdots g_m^{a^i_{m j_i}}=1$ for all $i=1,\ldots, m$.
Since $\det A_{j_1\ldots j_m}=\pm1$ from the hypothesis, Lemma~\ref{lemma:exponential equation} implies that
$g_1=\cdots =g_m=1$.
Now suppose $z^i_0\ne 0$ for some $i=1,\ldots, m$.
For simplicity let us assume that there is some $0\le s\le m$ such that  $z^1_0=\cdots=z^s_0=0$
and $z^i_0\ne 0$ for all $i=s+1,\ldots, m$.
Then equation (\ref{equation:fixed point condition}) implies that
$g_1=\cdots =g_s=1$ and $g_{s+1}^{a^i_{(s+1) j_i}}\cdots g_m^{a^i_{m j_i}}=1$ for all $i=s+1,\ldots, m$.
Since all principal minors of $A_{j_1\ldots j_m}$ are $\pm1$, Lemma~\ref{lemma:exponential equation} implies that
$g_{s+1}=\cdots =g_m=1$, which proves the lemma.
\end{proof}

Since the action $K$ on $X$ is free, the orbit space $X/K$ is a smooth manifold of dimension $2n$.
Let $M(A)$ be the orbit space $X/K$ with the action of $G=(S^1)^n$ defined by
\begin{eqnarray}\label{G action on M(A)}
\lefteqn{(t_1,\ldots,t_n)\cdot [(z^1_0,\ldots,z^1_{n_1}),\ldots, (z^m_0,\ldots z^m_{n_m})]=}\\
&&[(z^1_0,t_1z^1_1\ldots,t_{n_1}z^1_{n_1}),\ldots, (z^m_0,t_{n-n_m+1}z^m_1\ldots t_nz^m_{n_m})].\nonumber
\end{eqnarray}
Then we have the following proposition.

\begin{proposition}\label{proposition:general quasitoric manifold}
$M(A)$ is a quasitoric manifold over $\prod_{i=1}^m\Delta^{n_i}$ with $A$
as its associated matrix.
\end{proposition}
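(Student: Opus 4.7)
The plan is to identify $M(A) = X/K$ with a quasitoric manifold over $P = \prod_{i=1}^m \Delta^{n_i}$ by exploiting the fact that $X$ is the moment-angle manifold over $P$. Let $T = (S^1)^{n+m}$ act on $X$ coordinate-wise, and let $\mu\colon X \to P$ be the moment-type map whose $i$-th component sends $(z^i_0,\ldots,z^i_{n_i}) \in S^{2n_i+1}$ to $(|z^i_0|^2,\ldots,|z^i_{n_i}|^2) \in \Delta^{n_i}$. Then $\mu$ realizes $P$ as the orbit space $X/T$, and the $T$-isotropy of a point over the interior of a facet $F^i_{k_i}$ is precisely the circle $T^i_{k_i}$ rotating the coordinate $z^i_{k_i}$. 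The $K$-action in (\ref{equation:K-action}) is the restriction of the $T$-action along the subtorus embedding $\iota\colon K \hookrightarrow T$ sending $(g_1,\ldots,g_m)$ to the element whose $z^i_0$-slot is $g_i$ and whose $z^i_j$-slot for $j \geq 1$ is $g_1^{a^i_{1j}}\cdots g_m^{a^i_{mj}}$.

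Since $K$ acts freely on $X$ by Lemma~\ref{lemma:free action of K}, the quotient $M(A)$ is a closed smooth $2n$-manifold on which the residual torus $T/K$ acts. Writing $\sigma\colon G \hookrightarrow T$ for the section whose $z^i_0$-slots are $1$ and whose $z^i_j$-slot (for $j\geq 1$) carries the parameter $t_k$ appearing in (\ref{G action on M(A)}), I would check that $\iota(K) \cap \sigma(G) = \{1\}$ (the condition $\iota(g)^i_0 = g_i = 1$ forces $g = 1$), so by dimension count the composition $G \xrightarrow{\sigma} T \twoheadrightarrow T/K$ is an isomorphism and (\ref{G action on M(A)}) defines a well-defined effective $G$-action on $M(A)$. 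Since $\mu$ is $T$-invariant, it descends to $\bar\mu\colon M(A) \to P$, and its fibres coincide with $G$-orbits.

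To finish, I would verify local standardness and compute the characteristic function. Near a vertex $v_{j_1\ldots j_m}\in P$, the preimage $\mu^{-1}(U)$ of a small product neighbourhood is $T$-equivariantly $\mathbb{C}^n \times (S^1)^m$, with $\mathbb{C}^n$ parametrized by the coordinates $z^i_j$ with $j\neq j_i$ and $(S^1)^m$ by the unit-modulus coordinates $z^i_{j_i}$; the principal-minor hypothesis (Lemmas~\ref{lemma:principal minor} and~\ref{lemma:free action of K}) shows that $\iota(K)$ projects isomorphically onto the $(S^1)^m$ factor, so the $K$-quotient of this chart is $\mathbb{C}^n$ with a linear $G$-representation, making the $G$-action locally standard. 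For the characteristic function, over the interior of $F^i_{k_i}$ the $T$-isotropy is $T^i_{k_i}$; when $k_i\geq 1$ this circle already lies in $\sigma(G)$ and projects to the standard basis vector associated to $z^i_{k_i}$, matching (\ref{basis}), while when $k_i = 0$ one multiplies a generator of $T^i_0$ by $\iota(0,\ldots,g_i,\ldots,0)^{-1}$ (with $g_i$ the generator) to land in $\sigma(G)$; this substitution inserts $g_i^{-a^{i'}_{ij}}$ into each $z^{i'}_j$-slot with $j\geq 1$, yielding the one-parameter subgroup of $G$ spanned by $\mathbf{a}_i$ up to sign. Thus the characteristic function of $M(A)$ is exactly the one encoded by $A$.

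The most delicate point I expect is the local-standardness step: one must verify that at each vertex chart the images $\iota(K)$ and $\sigma(G)$ together span $T$, and the principal-minor condition of Lemma~\ref{lemma:principal minor} is precisely the combinatorial input that makes this complementarity hold, guaranteeing that the $K$-quotient inherits a standard $G$-representation.
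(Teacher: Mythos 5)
Your proof is correct and follows the paper's approach: both set up the orbit map $\pi\colon M(A)\to P$ coming from the moment-angle structure and compute the isotropy subgroup of the residual $G$-action over the interior of each facet $F^i_{k_i}$, matching the result to the rows of $A$. You flesh out two points the paper dispatches as routine --- the identification $G\cong T/K$ via the section $\sigma$ together with $\iota(K)\cap\sigma(G)=\{1\}$, and the vertex-chart verification of local standardness via the principal-minor condition --- which is a welcome expansion of the same argument rather than a different route.
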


\begin{proof}
We think of $q$-simplex $\Delta^q$ as
\[
\Delta^q=\{ (x_0,\dots,x_q)\in \mathbb R^{q+1}\mid x_0\ge 0,\dots,x_q\ge 0,
\sum_{i=0}^qx_i=1\}.
\]
Then
$P=\prod_{i=1}^m\Delta^{n_i}$ sits in $\prod_{i=1}^m \mathbb R^{n_i+1}$.
It is easy to see that $M(A)$ with the action of $G=(S^1)^n$ is
a quasitoric manifold over $P$ with the projection $\pi\colon M(A)\to P$
defined by
\[
\pi([(z^1_0,\ldots,z^1_{n_1}),\ldots (z^m_0,\ldots z^m_{n_m})])=
((|z^1_0|,\dots,|z^1_{n_1}|),\dots,(|z^m_0|,\dots,|z^m_{n_m}|)).
\]
The facets $F^i_j$ of $P$ are given by $x^i_j=0$ for some $1\le i\le m$ and
$0\le j\le n_i$, where $x^i_j$ denotes the $(j+1)$-st coordinate of the $i$-th
factor $\mathbb R^{n_i+1}$.  The isotropy subgroup of
a point in $\pi^{-1}(\interior F^i_j)$ is a circle subgroup. One can check
that it is the $(\sum_{k=1}^{i-1}n_k+j)$-th factor of $G=(S^1)^n$ when
$j\ge 1$ and the circle subgroup
\[
\{((g^{a^1_{i1}},\dots,g^{a^1_{in_1}}),\dots,(g^{a^m_{i1}},\dots,
g^{a^m_{in_m}}))\mid g\in S^1\}
\]
when $j=0$. This shows that if we denote the characteristic function of $M(A)$
by $\lambda$, then
\[
\lambda(F^1_1),\ldots,\lambda(F^1_{n_1}),\ldots,\lambda(F^m_1)\ldots,
\lambda(F^m_{n_m})
\]
are the standard basis elements of $\mathbb Z^n$
in the given order and
\[
\lambda(F^i_0)=((a^1_{i1},\dots,a^1_{in_1}),\dots,(a^m_{i1},\dots,
a^m_{in_m})) \in \mathbb Z^n \quad\text{for  $i=1,\ldots, m$,}
\]
which is the $i$-th row of our matrix $A$, proving the lemma.
\end{proof}

\section{Conjugation of vector matrices}

The correspondence between a quasitoric manifold
over $P=\prod_{i=1}^m\Delta^{n_i}$ and an $m\times m$ vector matrix $A$
depends on the order of the simplices $\Delta^{n_i}$'s in
the product formula of $P$. Namely,
if we consider $P=\prod_{i=1}^m\Delta^{n_{\sigma(i)}}$ for some permutation
$\sigma$ of $\{1,\ldots,m\}$, then the corresponding $m\times m$ vector matrix
$A_\sigma$ will be different from $A$.
In fact it is not difficult to see that if $E_\sigma$ is the $m\times m$
permutation matrix of $\sigma$ obtained from the identity matrix by permuting
the $i$-th row and column to $\sigma(i)$-th row and column respectively for
all $i=1,\ldots m$, then $A_\sigma = E_{\sigma}AE_\sigma^{-1}$.
One should be cautious that, as an $m\times m$ vector matrix, the entries in
the $j$-th column of $A_\sigma$ are vectors in
$\mathbb Z^{n_{\sigma(j)}}$ while the $j$-th column of $A$ are vectors in
$\mathbb Z^{n_j}$.

As an example let us consider $P$ as in Example~ \ref{example:triangular cylinder}.
If we consider $P=\Delta^1\times \Delta^2$ instead of $\Delta^2\times \Delta^1$ then the corresponding
$2\times 2$ vector matrix $A_\sigma$ is given by
\begin{eqnarray*}
A_\sigma & = & \left(\begin{array}{cc}
\mathbf a^2_2 & \mathbf a^1_2\\
\mathbf a^2_1 & \mathbf a^1_1
\end{array}\right)\\
& = & \left(\begin{array}{cc}
0 & 1 \\
1 & 0 \end{array}\right) A
\left(\begin{array}{cc}
0 & 1 \\
1 & 0\end{array}\right)^{-1}.
\end{eqnarray*}
The entries of the first column above are vectors in $\mathbb Z$
and the ones in the second column are in $\mathbb Z^2$.

We say that two $m\times m$ vector
matrices $A$ and $B$ are {\em conjugate} if there
exists an $m\times m$ permutation matrix $E_\sigma$ such that
$B = E_{\sigma}AE_\sigma^{-1}$.
In this case, the quasitoric manifolds $M(A)$ and $M(B)$ defined in
Proposition~\ref{proposition:general quasitoric manifold} are equivariantly
diffeomorphic.

Let $A$ be an $m\times m$ vector matrix of the form (\ref{equation:matrix A}).
A {\em proper principal minor} (resp. {\em determinant})
of $A$ means that a proper principal minor
(resp. determinant) of $A_{j_1\ldots j_m}$ for some $1\le j_1 \le n_1,\ldots,
1\le j_m\le n_m$.  The set of proper principal minors or determinants
is invariant under the conjugation relation.

\begin{lemma} \label{matrix}
Let $A$ be an $m\times m$ vector matrix of the form
(\ref{equation:matrix A}) such that all the proper principal
minors of $A$ are $1$.
If all the determinants of $A$ are $1$, then $A$ is conjugate to a
unipotent upper triangular vector matrix of the following form:
\begin{equation} \label{unipotent}
\left(\begin{array}{c c c c c}
\mathbf 1 & \mathbf  b_1^{2}& \mathbf b_1^{3} & \cdots  &\mathbf b_1^{m}\\
\mathbf 0 & \mathbf 1 & \mathbf b_2^{3}& \cdots  &\mathbf b_2^{m}\\
\vdots &&\ddots &\ddots&\vdots\\
\mathbf 0 & \cdots & \cdots & \mathbf 1 & \mathbf b_{m-1}^{m}\\
\mathbf 0 & \cdots & \cdots & \mathbf 0& \mathbf 1
\end{array}\right)
\end{equation}
where $\mathbf 0=(0,\ldots,0)$, $\mathbf 1=(1,\ldots, 1)$ of appropriate
sizes.
If all the determinants of $A$ are
$\pm 1$ and at least one of them is $-1$, then $A$ is conjugate to
a vector matrix of the following form:
\begin{equation} \label{matrixb}
\left(\begin{array}{c c c c c}
\mathbf 1 & \mathbf  b^2& \mathbf 0 & \cdots  &\mathbf 0\\
\mathbf 0 & \mathbf 1 & \mathbf b^3& \cdots  &\mathbf 0\\
\vdots &&\ddots &\ddots&\vdots\\
\mathbf 0 & \cdots & \cdots & \mathbf 1 & \mathbf b^m\\
\mathbf b^1 & \cdots & \cdots & \mathbf 0& \mathbf 1
\end{array}\right).
\end{equation}
where $\mathbf b^i$ is non-zero for any $i$
and $\prod_{i=1}^m b_i$, where $b_i$ is
any non-zero component of $\mathbf b^i$, is $(-1)^m2$.
(Therefore, the non-zero components in $\mathbf b^i$ are all same
for each $i$ and they are $\pm 1$ or $\pm 2$.)
\end{lemma}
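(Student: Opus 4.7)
The plan is to translate the principal-minor conditions on $A$ into a combinatorial condition on a directed graph $D$ on $\{1,\dots,m\}$, show that $D$ is acyclic in case 1 and a single directed $m$-cycle in case 2, and then conjugate by the permutation realizing the appropriate topological or cyclic order. To build $D$, I would first analyse the $2\times 2$ proper principal minors: since $\mathbf a^i_i=\mathbf 1$ by Remark~\ref{coro:diagonal vectors are 1}, the $2\times 2$ principal minor of $A_{j_1\dots j_m}$ on rows and columns $\{i,k\}$ equals $1-a^k_{i,j_k}\,a^i_{k,j_i}$, and requiring it to equal $1$ for every choice of $j_i,j_k$ forces $\mathbf a^k_i=\mathbf 0$ or $\mathbf a^i_k=\mathbf 0$. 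Let $D$ have an edge $i\to k$ whenever $\mathbf a^k_i\neq\mathbf 0$; so $D$ has at most one directed edge between each unordered pair. Next, if $D$ contains a directed cycle $i_1\to i_2\to\cdots\to i_r\to i_1$ with $3\le r\le m$, the reverse vectors vanish by the $2\times 2$ step, so the corresponding $r\times r$ principal minor of any $A_{j_1\dots j_m}$ has an almost-circulant shape and evaluates to
\[
1+(-1)^{r+1}\,b_{i_1}b_{i_2}\cdots b_{i_r},
\]
where $b_{i_s}$ is a component of the nonzero vector $\mathbf a^{i_{s+1}}_{i_s}$ (indices mod $r$). If $r<m$ this is a proper principal minor and must equal $1$, but choosing each $b_{i_s}$ to be a nonzero component gives a contradiction; hence $D$ has no cycles of length $<m$.

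In case 1 the same expansion with $r=m$ rules out an $m$-cycle as well, since the full determinant is also $1$; thus $D$ is acyclic. Topologically sorting $D$ yields a permutation $\sigma$ such that $E_\sigma A E_\sigma^{-1}$ has $\mathbf a^k_i=\mathbf 0$ for all $i>k$, i.e.\ it is unipotent upper triangular with diagonal $\mathbf 1$, which is exactly the form~(\ref{unipotent}).

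In case 2, $D$ must contain a cycle (otherwise case 1 would apply and every determinant would be $1$), and that cycle has length $m$ by the previous paragraph. After relabeling so that the cycle is $1\to 2\to\cdots\to m\to 1$, any extra edge $i\to j$ with $j\not\equiv i+1\pmod m$ would either shortcut the $m$-cycle, producing a cycle of length $<m$, or reverse a cycle edge, which is already forbidden; so $D$ is exactly this directed $m$-cycle and $A$ takes the shape~(\ref{matrixb}). The determinant $\det A_{j_1\dots j_m}=1+(-1)^{m+1}b^1_{j_1}\cdots b^m_{j_m}$ must lie in $\{\pm 1\}$ for every column choice; fixing all $j_s$ with $s\ne i$ so that the partial product $p=\prod_{s\ne i}b^s_{j_s}$ is nonzero, this forces every nonzero component of $\mathbf b^i$ to equal the single value $(-1)^m\cdot 2/p$, and running $i$ through $\{1,\dots,m\}$ yields $b_1\cdots b_m=(-1)^m\cdot 2$; in particular each $b_i\in\{\pm 1,\pm 2\}$.

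The main obstacle is the book-keeping in case 2: rigorously excluding every possible extra chord of the $m$-cycle (including small-$m$ degeneracies where the $2\times 2$ step itself becomes the full determinant) and executing the rigidity argument in the last step that simultaneously shows each $\mathbf b^i$ is nonzero, its nonzero entries coincide, and the product condition holds globally.
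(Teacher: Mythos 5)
Your proof is correct in outline but takes a genuinely different route from the paper. The paper's own proof is mostly a citation: it outsources the entire classification (deducing the shapes \eqref{unipotent} and \eqref{matrixb} from the principal-minor hypotheses) to Masuda--Panov~\cite{MP}, remarking that "the proof for an $m\times m$ vector matrix is quite similar," and it only writes out the final sentence about the components of $\mathbf b^i$ (essentially the same computation as your last paragraph, comparing $1+(-1)^{m+1}\prod b_i$ against $\pm 1$). You, by contrast, reprove the classification from scratch via the digraph $D$ on $\{1,\dots,m\}$ with $i\to k$ iff $\mathbf a^k_i\not=\mathbf 0$: the $2\times 2$ minors kill $2$-cycles, longer proper minors kill shorter cycles, a topological sort handles the acyclic case, and the unique induced $m$-cycle handles the other. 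This is cleaner to read than a pointer to another paper, and it makes visible exactly where the vector structure enters (a minor condition that must hold for \emph{all} component choices forces entire vectors to vanish).

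One place needs tightening. When you say "if $D$ contains a directed cycle $i_1\to\cdots\to i_r\to i_1$ with $3\le r\le m$, \dots the corresponding $r\times r$ principal minor \dots has an almost-circulant shape," this is not automatic: a cycle can have chords, and then the $r\times r$ submatrix on $\{i_1,\dots,i_r\}$ is not of circulant form and your determinant formula does not apply. The fix is to argue on a \emph{minimal}-length cycle, which is necessarily chordless (a chord would shortcut it, and $2$-cycles are already excluded). With that word inserted, the rest of the case analysis (the full $m$-cycle in case 2 is forced to be chordless for the same reason, and any extra arc either shortcuts or reverses an existing arc) goes through as you wrote it. Your closing paragraph on the $b_i$'s is fine, but it is worth anchoring it by first fixing one column choice $(j_1^*,\dots,j_m^*)$ realizing a $-1$ determinant and then varying one index at a time; this removes the apparent circularity in simultaneously concluding that each $\mathbf b^i$ is non-zero, that its non-zero entries coincide, and that the product is $(-1)^m 2$.
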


\begin{proof}
The lemma is proved in \cite{MP} when $A$ is an ordinary
$m\times m$ matrix except the last statement on the components of
$\mathbf b^i$, and the proof for an $m\times m$ vector matrix is quite
similar. So we refer the reader to the cited paper and shall prove
only the statement on the components of $\mathbf b^i$.

Let $B$ be the vector matrix of the form (\ref{matrixb}).
The determinants of $A$ are $\pm 1$ and at least one of them is $-1$ by
assumption
while any determinant of $B$ is of the form $1+(-1)^{m+1}\prod_{i=1}^m b_i$
where $b_i$ is a component of $\mathbf b^i$.
Since the set of determinants of $A$ agrees with that of $B$ as remarked
above, it follows that there is a non-zero $b_i$ for each $i$ and
$\prod_{i=1}^m b_i=(-1)^m2$ whenever each $b_i$ is non-zero. This implies the
statement on $b_i$'s in the lemma.
\end{proof}

\section{\Higher Bott towers}

A quasitoric manifold over a product of simplices also
appears in iterated projective bundles.
For a complex vector bundle $E$, we denote the total space of its
projectivization by $P(E)$.

\begin{definition}\label{def:Bott manifold}
We call a sequence
\begin{equation}\label{equation:Bott tower}
\xymatrix{ B_m\ar[r]^-{\pi_m}& B_{m-1}\ar[r]^-{\pi_{m-1}}&
\cdots\ar[r]^-{\pi_2}& B_1\ar[r]^-{\pi_1}& B_0}=\{\textrm{a point}\},
\end{equation}
where $B_j=P(\mathbb C\oplus\xi_j)$ and $\xi_j$ is the Whitney sum of
complex line bundles over $B_{j-1}$, a
{\em \higher Bott tower} and each $B_j$ for $j=1,\dots,m$
a {\em \higher Bott manifold}.
\end{definition}

Each $B_j$ admits an effective action of $G_j=(S^1)^{\sum_{i=1}^j\dim\xi_i}$
defined as follows.  Assume
by induction that $B_{j-1}$ admits an effective action of $G_{j-1}$.
Then it lifts to an action on $\xi_j$ since $H^1(B_{j-1})=0$ although
the lifting is not unique, see \cite{ha-yo76}.
On the other hand since
$\xi_j$ is the Whitney sum of complex line bundles, it admits an
action of $(S^1)^{\dim \xi_j}$ by scalar multiplication on fibers.
These two actions commute and define an action of $G_j$ on $\xi_j$,
which induces an effective action of $G_j$ on $B_j$.
Without much difficulty it can be
shown that $B_j$ with the action of $G_j$ is a quasitoric
manifold over $\prod_{i=1}^j\Delta^{\dim\xi_i}$.
Furthermore each $B_j$ is a
nonsingular toric variety (i.e., a toric manifold).

\begin{proposition}\label{theorem:small covers are Bott manifolds}
Let $M$ be a quasitoric manifold over
$P=\prod_{i=1}^m\Delta^{n_i}$, and let $A$ be an $m\times m$ vector
matrix associated with $M$.
Then $M$ is equivalent to a \higher Bott manifold
if $A$ is conjugate to an $m\times m$ upper triangular
vector matrix of the form (\ref{unipotent}).
\end{proposition}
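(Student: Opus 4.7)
The plan is to reduce to the case where $A$ itself has the unipotent upper triangular form (\ref{unipotent}), and then to observe that the quotient construction $M(A)$ from Section~4 manifestly decomposes as an iterated projective bundle, hence as a \higher Bott manifold.

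By the conjugation discussion in Section~5, conjugate vector matrices yield equivariantly diffeomorphic, hence equivalent, quasitoric manifolds, so I may assume that $A$ already has the form (\ref{unipotent}). By Proposition~\ref{proposition:general quasitoric manifold} combined with the uniqueness statement for characteristic functions recalled after Condition~\ref{condition}, $M$ is equivalent to $M(A)$, and it suffices to show that $M(A)$ is a \higher Bott manifold. To this end, I would unwind the $K$-action (\ref{equation:K-action}) using the shape of $A$: since every entry below the diagonal is $\mathbf 0$, the subtorus $K_j=\{(g_1,\dots,g_j,1,\dots,1)\}\subset K$ preserves $\prod_{i=1}^j S^{2n_i+1}$ and acts freely on it, by Lemma~\ref{lemma:free action of K} applied to the $j\times j$ principal submatrix of $A$ (whose principal minors are all $1$). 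Setting $B_j:=(\prod_{i=1}^{j}S^{2n_i+1})/K_j$ gives a tower
\[
M(A)=B_m\longrightarrow B_{m-1}\longrightarrow\cdots\longrightarrow B_1\longrightarrow B_0=\{\text{a point}\}
\]
in which the map $B_j\to B_{j-1}$ is induced by projection onto the first $j-1$ sphere factors.

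To identify this with a \higher Bott tower, I would perform the $K_j$-quotient in two stages. First mod out by $K_{j-1}$, which acts freely on $\prod_{i=1}^{j-1}S^{2n_i+1}$ and acts on $S^{2n_j+1}\subset\mathbb C\oplus\mathbb C^{n_j}$ through the characters $\chi_k^j(g_1,\dots,g_{j-1})=g_1^{(b_1^j)_k}\cdots g_{j-1}^{(b_{j-1}^j)_k}$ on the $k$-th fiber coordinate (for $k\ge 1$) and trivially on $z^j_0$. The resulting associated bundle $(\prod_{i=1}^{j-1}S^{2n_i+1})\times_{K_{j-1}}S^{2n_j+1}$ is precisely the unit sphere bundle of the complex vector bundle $\mathbb C\oplus\xi_j$ over $B_{j-1}$, where $\xi_j=L_1^{(j)}\oplus\cdots\oplus L_{n_j}^{(j)}$ is the Whitney sum of the line bundles $L_k^{(j)}$ associated to the characters $\chi_k^j$. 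Then modding out by the residual $g_j$, which acts by standard fiberwise scalar multiplication because the diagonal entries satisfy $\mathbf a_j^j=\mathbf 1$, yields $B_j=P(\mathbb C\oplus\xi_j)$. This is exactly the recipe of Definition~\ref{def:Bott manifold}, so $M(A)=B_m$ is a \higher Bott manifold.

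The main obstacle I anticipate is the careful identification of the intermediate $K_{j-1}$-quotient with the sphere bundle of $\mathbb C\oplus\xi_j$, together with its induced $g_j$-action. This amounts to a Borel-construction/character computation using the freeness of the $K_{j-1}$-action to pass between geometric and homotopy quotients; once it is carried out, the stage-by-stage decomposition above gives the \higher Bott tower presenting $M(A)$, and hence the desired equivalence $M\simeq B_m$.
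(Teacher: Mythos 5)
Your proof is correct and follows essentially the same route as the paper: reduce by conjugation to the unipotent form \eqref{unipotent}, identify $M$ with $M(A)$ via Proposition~\ref{proposition:general quasitoric manifold}, observe that the upper-triangular shape lets the $K$-quotient of $X_j$ factor through the subtorus $K_j$, and then identify each stage map $B_j\to B_{j-1}$ as the projectivization of a Whitney sum of line bundles coming from the $K_{j-1}$-action on the fiber $\mathbb C^{n_j+1}$. The only cosmetic difference is that you carry out the quotient in two explicit stages (first $K_{j-1}$, then the residual circle $g_j$) and separate off the trivial $\mathbb C$-summand so that $\xi_j$ matches Definition~\ref{def:Bott manifold} literally, whereas the paper builds the full rank-$(n_j+1)$ bundle at once; this is a presentational refinement, not a different argument.
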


\begin{remark}  We will see later that the \lq\lq only if" statement
in the proposition above also holds, see
Lemma~\ref{matrix} and
Theorem~\ref{theorem:complex Bott manifold}.
\end{remark}

\begin{proof}
We may assume that $M=M(A)$ and $A$ is of the form (\ref{unipotent}).
We recall the quotient construction in Section~\ref{sect:3}.
Let $X_j=\prod_{i=1}^j S^{2n_i+1}$ for $j=1,\dots,m$, so $X_m$ agrees with
$X$ in
Section~\ref{sect:3}.  The group $K=(S^1)^m$ is acting on $X$ as in
(\ref{equation:K-action}) and $X/K=M(A)$.  We set $B_j=X_j/K$, so $B_m=M(A)$.
In the following we claim that the sequence
\[
\xymatrix{ B_m \ar[r]^-{\pi_m}& B_{m-1}\ar[r]^-{\pi_{m-1}}&
\cdots\ar[r]^-{\pi_2}& B_1\ar[r]^-{\pi_1}& B_0}=\{\textrm{a point}\}
\]
induced from the natural projections from $X_j$ on $X_{j-1}$ for
$j=m,\dots,2,1$ is a \higher Bott tower.

Since $A$ is of the form (\ref{unipotent}),
the last $(m-j)$ factors of $K=(S^1)^m$ are acting on $X_j$ trivially,
so the action of $K$ on $X_j$ reduces to an action of the product $K_j$
of the first $j$ factors of $K=(S^1)^m$.
This means that $X_j/K=X_j/K_j$.
Moreover, the last factor of $K_j$
is acting on the last factor $S^{2n_j+1}$ of $X_j$ as scalar multiplication
and trivially on the other factors of $X_j$. Therefore
the map $\pi_j\colon B_j=X_j/K_j \to B_{j-1}=X_{j-1}/K_{j-1}$ is
a fibration with $\mathbb CP^{n_j}=S^{2n_j+1}/S^1$ as a fiber and
this is actually the projectivization of a complex vector bundle $\xi_j$ over
$B_{j-1}$.  In fact, the bundle $\xi_j$ is obtained as follows.
Let $V_j$ be $\mathbb C^{n_j+1}$ with the linear $K_{j-1}$-action defined by
\[
\begin{split}
&(g_1,\ldots, g_{j-1})\cdot(z^j_0,\ldots,z^j_{n_j})\\
=&(z^j_0,(g_1^{b^j_{11}}\cdots g_{j-1}^{b^j_{j-1\ 1}})z^j_1,\ldots,
(g_1^{b^j_{1n_j}}\cdots g_{j-1}^{b^j_{j-1\ n_j}})z^j_{n_j})
\end{split}
\]
where $\mathbf b^j_i=(b^j_{i1},\dots,b^j_{i n_j})$ is a vector
in \eqref{unipotent} for $i=1,\dots,j-1$.
Since the action of $K_{j-1}$ on $X_{j-1}$ is free, the projection
$$(X_{j-1}\times V_j)/K_{j-1}\to X_{j-1}/K_{j-1}
=B_{j-1}$$
becomes a vector bundle, where the action of $K_{j-1}$ on
$X_{j-1}\times V_j$ is a diagonal one.
This is the desired bundle $\xi_j$ and since
$V_j$ decomposes into sum of complex one dimensional $K$-modules, the bundle
$\xi_j$ decomposes into the Whitney sum of complex line bundles accordingly.
\end{proof}

One can describe the bundles $\xi_j$ in the proof of the proposition above
more explicitly. For that let us fix some notation.
For a vector bundle $\eta$ and a vector $\mathbf a=(a_1, \ldots, a_n)\in \mathbb Z^n$
let $\eta^{\mathbf a}$ denote the bundle $\eta^{a_1}\oplus\cdots\oplus \eta^{a_n}$. For vector bundles
$\eta_1, \ldots,\eta_k$ over a space and vectors $\mathbf a_1=(a_{11},\ldots, a_{1n}), \ldots, \mathbf a_k=(a_{k1},\ldots ,a_{kn})$
let
\begin{eqnarray*}
\odot_{i=1}^k\eta_i^{\mathbf a_i}&=&
\eta_1^{\mathbf a_1}\odot\cdots\odot\eta_k^{\mathbf a_k}\\
&=&(\eta^{a_{11}}_1\otimes\cdots\otimes\eta_k^{a_{k1}})\oplus\cdots\oplus(\eta^{a_{1n}}_1\otimes\cdots\otimes\eta_k^{a_{km}})
\end{eqnarray*}
where the last expression denotes the Whitney
sum of componentwise tensor products.

Let $\xi_{1}^{2}$ denote the canonical line bundle over $B_1$ and let
$\xi_{1}^{3}=\pi^\ast_{2}(\xi_{1}^{2})$ the pull-back bundle of the
canonical line bundle over $B_1$ to $B_2$ via the projection
$\pi_2\colon B_2\to B_1$. In general, let
$\xi_{j-1}^{j}$ be the canonical line bundle over $B_{j-1}$, and we
inductively define
$$\xi_{j-k}^{j}=\pi^\ast_j\circ\cdots\circ\pi^\ast_{j-k+1}
(\xi_{j-k}^{j-k+1})\quad\text{for $k=2,\dots,j-1$}.
$$
Then one can see that $\xi_j=\odot_{i=1}^{j-1}(\xi_i^j)^{\mathbf b_{i}^{j}}$.

A \higher Bott manifold is not only a quasitoric manifold over a product of
simplices but also a complex manifold on which the
action preserves the complex structure, in particular,
it has an almost complex structure left invariant under the action.
The following theorem shows that the converse holds.
We remark that the equivalence (1) $\Leftrightarrow$ (3) is
a particular case of \cite[Theorem 6]{Do}.

\begin{theorem}\label{theorem:complex Bott manifold}
Let $M$ be a quasitoric manifold over
$P=\prod_{i=1}^m\Delta^{n_i}$, and let $A$ be the
$m\times m$ vector matrix associated with $M$ which has
$\mathbf 1$ as the diagonal entries.
Then the following are equivalent:
\begin{enumerate}
\item $M$ is equivalent to a \higher Bott manifold.
\item $M$ is equivalent to a quasitoric manifold which
admits an invariant almost complex structure under the action.
\item All the principal minors of $A$ are $1$.
\end{enumerate}
\end{theorem}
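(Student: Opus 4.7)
The plan is the cyclic chain $(1)\Rightarrow(2)\Rightarrow(3)\Rightarrow(1)$; two of the three links rest on machinery already in the paper and the real work is $(2)\Rightarrow(3)$. For $(1)\Rightarrow(2)$, as noted in the paragraph preceding the theorem, a \higher Bott manifold carries a torus-invariant complex structure, hence in particular an invariant almost complex structure. For $(3)\Rightarrow(1)$, the assumption that all principal minors of $A$ equal $+1$ implies in particular that all proper principal minors are $+1$ and that all full determinants $\det A_{j_1\ldots j_m}$ are $+1$; Lemma~\ref{matrix} then supplies a permutation matrix $E_\sigma$ with $E_\sigma A E_\sigma^{-1}$ of the unipotent upper-triangular form \eqref{unipotent}. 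Since conjugation corresponds to reordering the factors of $P$ and gives an equivariantly diffeomorphic quasitoric manifold (Section~5), Proposition~\ref{theorem:small covers are Bott manifolds} identifies $M$ with a \higher Bott manifold.

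For the main implication $(2)\Rightarrow(3)$, I would invoke the characterization of invariant almost complex structures on a quasitoric manifold due to Masuda (Lemma 1.5 and 1.10 of \cite{Ma}), already cited in Section~2: an invariant almost complex structure on $M$ is equivalent to a choice of signs of the characteristic vectors $\lambda(F)$ making every vertex determinant equal $+1$, where the vertex determinant at a vertex $v$ is the determinant of the $n\times n$ matrix whose rows are the $\lambda(F)$'s of the $n$ facets meeting at $v$. The central observation, extending the computation at the end of Example~\ref{example:triangular cylinder}, is that the vertex determinant at $v_{j_1\ldots j_m}$ coincides---up to the sign of a row permutation---with the $S$-principal minor of $A_{j_1\ldots j_m}$, where $S=\{i:j_i\neq 0\}$. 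This follows by expanding the vertex matrix along the rows given by standard basis vectors, and as $(j_1,\ldots,j_m)$ ranges over the vertices of $P$ and $S$ over subsets of $\{1,\ldots,m\}$, these vertex determinants exhaust all principal minors of $A$.

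Applying this to the hypothesis $(2)$: the signs coming from the almost complex structure force every such vertex determinant, and hence every principal minor of $A$, to equal $+1$. Specializing to the vertices $v_{0\ldots 0\, k\, 0\ldots 0}$ (for which $S=\{i\}$) collapses the vertex determinant to the single entry $a^i_{i,k}$, so $a^i_{i,k}=+1$ for all $i,k$; this means the complex-structure signing already realizes the normalization $\mathbf a^i_i=(1,\ldots,1)$ of Remark~\ref{coro:diagonal vectors are 1} without any further sign adjustment, so the identification is consistent with the hypothesis on $A$ in the statement. The hardest ingredient is the sign bookkeeping in the vertex-versus-minor identification: one must verify that the row permutation needed to bring each vertex matrix into the block-standard layout produces exactly the sign required so that ``vertex determinant is $+1$'' translates into ``principal minor is $+1$,'' uniformly across all vertices. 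I also expect to need a small check that the invariance of the set of principal minors under the residual sign-flip freedom in choosing the $\mathbf a_i$'s (conjugation by a diagonal $\pm 1$ matrix on each submatrix $A_{j_1\ldots j_m}$) is compatible with the normalization.
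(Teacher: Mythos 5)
Your cyclic chain matches the paper's, and the links $(1)\Rightarrow(2)$ (Bott manifolds carry an invariant complex structure) and $(3)\Rightarrow(1)$ (via Lemma~\ref{matrix} and Proposition~\ref{theorem:small covers are Bott manifolds}) are exactly what the paper does. The interesting divergence is $(2)\Rightarrow(3)$. The paper's argument there is quite short and outsources the hard work: it fixes the sign-unambiguous $\lambda$ coming from the almost complex structure, restricts $M$ to the facial submanifolds over the \emph{cubical faces} of $P$ (each of which is a quasitoric manifold over a cube inheriting an invariant almost complex structure), and cites \cite[Theorem 3.4]{MP} to conclude that all principal minors of the restriction of $-\Lambda$ to each cubical face equal $1$, whence $A=-\Lambda$. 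Your route bypasses the cube result and tries to deduce the principal-minor condition directly from the fact that, with the omniorientation induced by an almost complex structure, the signs attached to the vertices are all $+1$, identifying vertex determinants with principal minors as in the proof of Lemma~\ref{lemma:principal minor}. This is a legitimately different organization: it is more self-contained in spirit, but it re-derives (rather than cites) precisely the content of \cite[Theorem 3.4]{MP}, and that content is where the work is.

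The gap you flag at the end — the sign bookkeeping — is real, and in fact your intermediate claim appears to have the sign backwards. You assert that specializing to the vertices $v_{0\ldots0 k 0\ldots0}$ forces $a^i_{ik}=+1$ directly from the a.c.s.\ omniorientation, so that ``the complex-structure signing already realizes the normalization $\mathbf a^i_i=(1,\ldots,1)$ without any further sign adjustment.'' The paper's conclusion $A=-\Lambda$ says the opposite: the characteristic function $\lambda$ determined by the invariant almost complex structure produces a vector matrix $\Lambda$ with $-\mathbf 1$ on the diagonal, and the normalized matrix $A$ of the hypothesis is $-\Lambda$, not $\Lambda$. (Already for $\C P^1$ with its toric complex structure the two primitive characteristic vectors are $\mathbf e_1$ and $-\mathbf e_1$, so $\Lambda=(-1)$.) Relatedly, the ``vertex determinant equals $+1$'' criterion as you state it is not literally the determinant of the $n\times n$ matrix of $\lambda(F)$'s: that quantity changes sign under reordering the rows and under reversing the identification of $\HomS1n$ with $\Z^n$, whereas the multi-fan sign at a fixed point is intrinsic and compares that determinant against the orientation data at the fixed point. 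These orientation corrections are exactly what produce the global minus sign and what your ``row-permutation bookkeeping'' paragraph would have to resolve uniformly over all vertices. Finally, a small over-claim: you phrase Masuda's Lemmas 1.5 and 1.10 of \cite{Ma} as giving an \emph{equivalence} between invariant almost complex structures and sign choices making all vertex signs $+1$; only the forward implication is needed, and only the forward implication is plausibly available from those lemmas — the converse is essentially the nontrivial part of \cite[Theorem 3.4]{MP} for cubes and of the present theorem for products of simplices.
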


\begin{proof}
The implication $(1)\Rightarrow (2)$ is obvious and the implication
$(3)\Rightarrow (1)$ follows from
Proposition~\ref{theorem:small covers are Bott manifolds} and
Lemma~\ref{matrix}, so
it suffices to prove the implication $(2)\Rightarrow (3)$.

We may assume that $M$ itself admits an invariant almost complex structure.
As is noted in the paragraph before Condition~\ref{condition} we can
define a sign-unambiguous characteristic function $\lambda$ of $M$.
Let $\Lambda$ be the matrix associated with
$\lambda$. To each cubical face of $P$, the submanifold of $M$ over it
inherits an invariant almost complex structure, so it follows from
\cite[Theorem 3.4]{MP} that all principal minors of the restriction of
$-\Lambda$
to each cubical face of $P$ are equal to 1.  Therefore $A=-\Lambda$ and
this proves (3).
\end{proof}

\begin{remark} \label{difference}
A difference between quasitoric manifolds and small covers
appears here.  Namely, not every quasitoric manifold over a product
of simplices is equivalent to a \higher Bott manifold as is seen from
Theorem~\ref{theorem:complex Bott manifold},
while it follows from the real version of
Proposition~\ref{theorem:small covers are Bott manifolds} and the
$\Z/2$ version of the former part of Lemma~\ref{matrix}
that every small cover over a product of simplices turns out to be
equivalent to a \higher Bott manifold (over $\R$).
\end{remark}

\section{Cohomology ring} \label{sect:cohom}

The connected sum $\C P^2\# \C P^2$ is a quasitoric manifold over
a square but not homeomorphic to a Bott manifold (or Hirzebruch surface)
over a square.  In the rest of this paper, we shall give
a sufficient condition in terms of cohomology ring
for a quasitoric manifold over a product of simplices
to be homeomorphic to a \higher Bott manifold (Theorem~\ref{cotri}).
This section is a preliminary section for the purpose.

\begin{lemma}\label{lemma:cohomology ring of quasitoric and small cover}
Let $M$ be a quasitoric manifold
over $\prod_{i=1}^m\Delta^{n_i}$ and let $A$ be the vector matrix
of the form (\ref{equation:matrix A}) associated with $M$. Then
\begin{equation} \label{cohoM}
H^{\ast}(M)=\Z[y_1,\ldots, y_{m}]/L
\end{equation}
where the ideal $L$ is generated by the following $m$ expressions:
\begin{equation} \label{yrela}
y_k\cdot\prod_{j=1}^{n_k}(\sum_{i=1}^m a^k_{ij}y_i)\quad
\text{for $k=1,\dots,m$.}
\end{equation}
\end{lemma}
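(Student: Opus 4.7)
The plan is to start from the presentation \eqref{equation:cohomology ring} and simply eliminate the redundant generators using the linear relations in $J$.

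First I would enumerate the facets and their indeterminates. Writing $\omega^i_{k_i}$ for the generator corresponding to $F^i_{k_i}$, the Stanley--Reisner ideal $I$ is the first thing to identify. Facets from distinct factors of $P=\prod_{i=1}^m\Delta^{n_i}$ always meet (the intersection is a product of faces), so the only way to obtain empty intersection is to combine all $n_i+1$ facets of a single $\Delta^{n_i}$. Hence the minimal non-faces of $P$ are precisely the sets $\{F^i_0,F^i_1,\dots,F^i_{n_i}\}$, and
\[
I=(\omega^i_0\omega^i_1\cdots\omega^i_{n_i}\mid i=1,\dots,m).
\]

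Second, I would write down the linear ideal $J$ using the basis fixed in \eqref{basis}. Since $\lambda(F^k_j)$ for $j\ge 1$ is the standard basis vector corresponding to coordinate $\ell=\sum_{i<k}n_i+j$, and $\lambda(F^i_0)=\mathbf a_i$ has $\ell$-th component $a^k_{ij}$, while all other $\lambda(F^{k'}_{j'})$ with $j'\ge 1$ contribute $0$ to the $\ell$-th coordinate, the $\ell$-th linear form is
\[
\omega^k_j+\sum_{i=1}^m a^k_{ij}\,\omega^i_0 \qquad (k=1,\dots,m;\ j=1,\dots,n_k).
\]
Thus $J$ is generated by these $n_1+\cdots+n_m=n$ linear expressions.

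Third, I would use these linear relations to eliminate every $\omega^k_j$ with $j\ge 1$. Put $y_i:=\omega^i_0$; then in the quotient ring $\omega^k_j=-\sum_{i=1}^m a^k_{ij}y_i$. Substituting into the generators $\omega^k_0\omega^k_1\cdots\omega^k_{n_k}$ of $I$ yields, up to the overall sign $(-1)^{n_k}$ which does not alter the ideal generated,
\[
y_k\cdot\prod_{j=1}^{n_k}\Bigl(\sum_{i=1}^m a^k_{ij}y_i\Bigr)\qquad (k=1,\dots,m),
\]
which gives precisely the presentation \eqref{cohoM}--\eqref{yrela}.

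There is no genuine obstacle here; the whole argument is a bookkeeping exercise. The one point that requires a modicum of care is verifying that $J$ really does allow one to solve for each $\omega^k_j$ ($j\ge 1$) uniquely in terms of the $y_i$, which is immediate from the choice of basis \eqref{basis} (the matrix of coefficients of $J$ relative to the variables $\{\omega^k_j : j\ge 1\}$ is the identity). The sign $(-1)^{n_k}$ that appears after substitution is harmless since it is absorbed into the ideal.
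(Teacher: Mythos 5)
Your proof is correct and follows exactly the same route as the paper's: identify the Stanley--Reisner ideal $I=(\omega^i_0\cdots\omega^i_{n_i})$, read off the linear forms in $J$ from the matrix $\bigl(\begin{smallmatrix}A\\ I_n\end{smallmatrix}\bigr)$, use them to eliminate $\omega^k_j$ for $j\ge 1$ in favour of $y_i=\omega^i_0$, and substitute into the generators of $I$. You are slightly more explicit than the paper in spelling out why the minimal non-faces are exactly the sets of all facets from a single simplex factor, and in noting that the sign $(-1)^{n_k}$ from the substitution is absorbed into the ideal, but these are clarifications of the same argument rather than a different approach.
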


\begin{proof}
We will use the result (\ref{equation:cohomology ring}).
In our case, the matrix in (\ref{equation:lambda}) is of the form
\begin{equation} \label{equation:AI}
(\lambda_{ij})=\left(\begin{array}{c}
A\\
I_n
\end{array}\right)
\end{equation}
where $I_n$ is the $n\times n$ identity matrix.
Let
$$\omega^1_0,\ldots\omega^1_{n_1},\ldots,\omega^m_0,\ldots,\omega^m_{n_m}$$
be the indeterminates corresponding to the facets
$$F^1_0,\ldots,F^1_{n_1},\ldots, F^m_0,\ldots, F^m_{n_m}$$
in the given order.
Then by
(\ref{equation:cohomology ring}) we have
\begin{equation}\label{equation:cohomology ring2}
H^{\ast}(M)\cong
 \Z[\omega^1_0,\ldots,\omega^1_{n_1},\ldots,\omega^m_0,\ldots,
 \omega^m_{n_m}]/(I+J)
\end{equation}
where $I$ is the ideal generated by the monomials
$$\omega^i_0\cdots\omega^i_{n_i}\qquad \text{for }i=1,\ldots,m$$
because the intersection of facets $F^i_0,\ldots,F^i_{n_i}$ is empty
for $i=1,\ldots, m$, and $J$ is the ideal generated by
\[
\begin{split}
\lambda_j= &\ \ \lambda_{1j}\omega^1_0+\cdots+\lambda_{mj}\omega^m_0\\
&+\lambda_{(m+1)j}\omega^1_1+\cdots+\lambda_{(m+n_1)j}\omega^1_{n_1}\\
&+\cdots\\
&+\lambda_{(m+\sum_{i=1}^{m-1}n_i+1)j}\omega^m_1+\cdots
\lambda_{(m+n)j}\omega^m_{n_m}
\end{split}
\]
for $j=1,\ldots, m+n$ because the order of the row vectors in
(\ref{equation:AI}) is
\[
\lambda(F^1_0),\dots,\lambda(F^m_0),\lambda(F^1_1),\dots,\lambda(F^1_{n_1}),
\dots, \lambda(F^m_1),\dots,\lambda(F^m_{n_m}).
\]

If $j=(\sum_{i=1}^{k-1}n_i)+\ell$ and $1\le \ell \le n_k$,
then
$$\lambda_j=a^k_{1\ell}\omega^1_0+a^k_{2\ell}\omega^2_0 + \cdots +
a^k_{m\ell}\omega^m_0+\omega^k_\ell$$
Since $\lambda_j=0$ in $H^\ast(M)$,
we have that
\begin{equation} \label{omega}
\omega^k_\ell=-(a^k_{1\ell}\omega^1_0+a^k_{2\ell}\omega^2_0 + \cdots +
a^k_{m\ell}\omega^m_0).
\end{equation}
Set $y_k=\omega^k_0$ for $k=1,\ldots, m$. Then
$\omega^k_0\cdots\omega^k_{n_1}=0$ in the cohomology ring implies that
\[
y_k\prod_{\ell=1}^{n_k}(a^k_{1\ell}y_1+a^k_{2\ell}y_2+\cdots+a^k_{m\ell}y_m)=0.
\]
This proves the relation in the lemma.
\end{proof}

\begin{lemma}\label{lamma:alpha_{ij}is not zero}
Let $M$ and $y_1,\ldots, y_m$ be as above.
Let $x=\sum_{j=1}^mb_jy_j$ be an element of $H^\ast(M)$ such that
$b_j\ne 0$ for some $j$. Then $x^{n_j}\ne 0$ in $H^\ast(M)$.
\end{lemma}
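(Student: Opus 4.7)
The plan is to project the problem onto a one-variable truncated polynomial ring. More precisely, I would construct a ring homomorphism
$$
\varphi\colon H^\ast(M)\longrightarrow \Z[t]/(t^{n_j+1})
$$
by sending $y_j\mapsto t$ and $y_i\mapsto 0$ for every $i\ne j$. Once $\varphi$ is shown to be well defined, the image of $x=\sum_{i=1}^m b_iy_i$ is simply $b_j t$, so $\varphi(x^{n_j})=b_j^{n_j}t^{n_j}$; since $b_j$ is a nonzero integer and $t^{n_j}$ represents a nonzero class in $\Z[t]/(t^{n_j+1})$ (which is free of rank $n_j+1$ over $\Z$), this image is nonzero, and consequently $x^{n_j}$ cannot vanish in $H^\ast(M)$.

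The only point to verify is that $\varphi$ descends to the quotient, i.e., that each generator $y_k\prod_{\ell=1}^{n_k}\bigl(\sum_{i=1}^m a^k_{i\ell}y_i\bigr)$ of the ideal $L$ from Lemma~\ref{lemma:cohomology ring of quasitoric and small cover} is sent into $(t^{n_j+1})$. When $k\ne j$ this is automatic, because the generator carries $y_k$ as a factor and $\varphi(y_k)=0$. When $k=j$ I would invoke the diagonal normalization $\mathbf a^j_j=(1,\dots,1)$ guaranteed by Remark~\ref{coro:diagonal vectors are 1}; under the substitution, each of the $n_j$ linear factors $\sum_i a^j_{i\ell}y_i$ becomes $a^j_{j\ell}t=t$, so the entire generator maps to $t\cdot t^{n_j}=t^{n_j+1}\in (t^{n_j+1})$.

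I do not anticipate any genuine obstacle. Once one observes that the diagonal normalization of $A$ causes the relation indexed by $k=j$ to collapse precisely to $t^{n_j+1}$ while every other relation collapses to $0$, the argument is essentially a one-line substitution. The only thing a reader might pause at is whether the normalization $\mathbf a^j_j=(1,\dots,1)$ is always available, and this is exactly what Remark~\ref{coro:diagonal vectors are 1} is there to supply.
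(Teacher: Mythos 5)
Your proof is correct. You verify that the evaluation map $\varphi\colon\Z[y_1,\dots,y_m]\to\Z[t]/(t^{n_j+1})$ kills each generator $g_k=y_k\prod_{\ell}(\sum_i a^k_{i\ell}y_i)$ of $L$ — trivially when $k\ne j$ because $\varphi(y_k)=0$, and when $k=j$ because the diagonal entries $a^j_{j\ell}$ collapse $g_j$ to a unit multiple of $t^{n_j+1}$ — so $\varphi$ descends to $H^*(M)$, and then $\varphi(x^{n_j})=b_j^{n_j}t^{n_j}\ne 0$ forces $x^{n_j}\ne 0$. (One small remark: the normalization $\mathbf a^j_j=(1,\dots,1)$ is not actually needed; $a^j_{j\ell}=\pm1$, or indeed any integers, would still send $g_j$ to a multiple of $t^{n_j+1}$.)

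The paper reaches the same conclusion by a direct coefficient argument: it assumes $x^{n_j}=0$, hence $(\sum b_iy_i)^{n_j}\in L$, and then observes that no element of $L$ can contain the pure monomial $y_j^{s}$ for $s\le n_j$ (each $g_k$ with $k\ne j$ carries the factor $y_k$, while $g_j$ has degree $n_j+1$), yet $(\sum b_iy_i)^{n_j}$ has $y_j^{n_j}$ with nonzero coefficient $b_j^{n_j}$ — a contradiction. Both arguments rest on exactly the same structural facts about the generators $g_k$; the difference is one of packaging. Your ring-homomorphism formulation is a bit cleaner and more robust: it hides the case analysis about which monomials can appear inside the well-definedness check, and it presents a positive certificate ($\varphi(x^{n_j})\ne0$) rather than a proof by contradiction about coefficients inside the ideal. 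The paper's version is shorter on the page but leaves the reader to verify that "$y_j^{n_j+1}$ is the least pure power of $y_j$ occurring in $L$," which is precisely what your well-definedness check makes explicit.
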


\begin{proof}
Suppose $x^{n_j}=0$ on the contrary. Then $(\sum_{j=1}^mb_jy_j)^{n_j}$
must be in the ideal $L$ in (\ref{yrela}).
However, $y_j^{n_j+1}$ is the least power of $y_j$ which appears
as a term in a polynomial of $L$ while $(\sum_{j=1}^mb_jy_j)^{n_j}$  contains
a non-zero scalar multiple of $y_j^{n_j}$ because $b_j\not=0$ by assumption.
This is a contradiction.
\end{proof}

\begin{lemma} \label{restr}
Let $M(j)$ be a facial submanifold of $M$ over
$\prod_{i\not=j}^m\Delta^{n_i}$.
Then $H^*(M(j))$ is equal to (\ref{cohoM}) with $y_j=0$ plugged in.
\end{lemma}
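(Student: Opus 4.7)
The plan is to recognize $M(j)$ as a quasitoric manifold over $P':=\prod_{i\neq j}\Delta^{n_i}$ in its own right, apply Lemma~\ref{lemma:cohomology ring of quasitoric and small cover} to it, and then match the answer with the substitution $y_j=0$ in (\ref{cohoM}).

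First I would identify the face of $P$ over which $M(j)$ lies. Under the normalization of Remark~\ref{coro:diagonal vectors are 1}, the facets $F^j_1,\dots,F^j_{n_j}$ are sent by $\lambda$ to the standard basis block indexed by the $j$-th simplex, and their common intersection in $P$ is the face $\Delta^{n_1}\times\cdots\times\{v^j_0\}\times\cdots\times\Delta^{n_m}\cong P'$. Thus $M(j)=\bigcap_{k=1}^{n_j}\pi^{-1}(F^j_k)$, and it inherits a quasitoric manifold structure over $P'$ whose characteristic function is obtained from $\lambda$ by the projection $\Z^n\twoheadrightarrow \Z^n/\langle\lambda(F^j_1),\dots,\lambda(F^j_{n_j})\rangle\cong\Z^{n-n_j}$. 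Concretely this projection deletes the $j$-th block of coordinates, so the vector matrix $A'$ associated with $M(j)$ is obtained from $A$ by striking the $j$-th row $\mathbf a_j$ and the $j$-th column $(\mathbf a^j_1,\dots,\mathbf a^j_m)$; the diagonal entries $\mathbf a^i_i=\mathbf 1$ for $i\neq j$ are preserved.

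Applying Lemma~\ref{lemma:cohomology ring of quasitoric and small cover} to $M(j)$ with matrix $A'$ then yields
$$H^*(M(j))=\Z[y_i\,:\,i\neq j]\bigm/L',$$
where $L'$ is the ideal generated by $y_k\prod_{\ell=1}^{n_k}\bigl(\sum_{i\neq j}a^k_{i\ell}y_i\bigr)$ for $k\neq j$. On the other hand, substituting $y_j=0$ in the generators (\ref{yrela}) of $L$ annihilates the $k=j$ generator entirely and drops the $a^k_{j\ell}y_j$ term from each remaining inner sum, reproducing exactly the generators of $L'$. Hence (\ref{cohoM}) with $y_j$ set to $0$ matches $H^*(M(j))$ as required.

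The only nontrivial input is the identification of $A'$ with the stated submatrix of $A$; this reflects the general fact that the characteristic function of a quasitoric manifold restricts to a quotient characteristic function on any facial submanifold, which is in turn immediate from local standardness of the action along the face. Everything else is a direct substitution.
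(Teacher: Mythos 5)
Your proof is correct, but it takes a genuinely different route from the paper's. You identify $M(j)=\bigcap_{k=1}^{n_j}\pi^{-1}(F^j_k)$ intrinsically as a quasitoric manifold over $P'=\prod_{i\neq j}\Delta^{n_i}$, work out that its characteristic function is the projection of $\lambda$ modulo $\langle\lambda(F^j_1),\dots,\lambda(F^j_{n_j})\rangle$ (which, in coordinates, strikes the $j$-th row and $j$-th column of $A$), and then reapply Lemma~\ref{lemma:cohomology ring of quasitoric and small cover} to $M(j)$; the answer visibly coincides with $L$ modulo $y_j$. The paper instead argues extrinsically through the restriction map $H^*(M)\to H^*(M(j))$: it shows $y_j$ restricts to zero on $M(j)$ (since $M(j)$ misses the characteristic submanifold dual to $y_j$), so the map factors through $\Z[y_1,\dots,\widehat{y_j},\dots,y_m]/(g_1',\dots,\widehat{g_j'},\dots,g_m')$, and concludes by a rank count that this surjection is an isomorphism. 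Your version has the advantage of being structural and avoids the rank comparison, but it silently invokes the general fact that a facial submanifold of a quasitoric manifold is itself quasitoric with the quotient characteristic function (a standard result in Davis--Januszkiewicz but not restated in this paper). The paper's version is more self-contained at that point, trading the appeal to that general fact for an explicit surjectivity-plus-dimension argument. Both are complete and roughly the same length.
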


\begin{proof}
Let $y_1,\dots,y_m$ be the generators of $H^*(M)$ in
Lemma~\ref{lemma:cohomology ring of quasitoric and small cover}.
We may assume that $M(j)$ is over $\prod_{i\not=j}^m\Delta^{n_i}\times \{v\}$
where $v$ is a vertex of $\Delta^{n_j}$ and also that $y_j$ is the dual of
the characteristic submanifold $M_j$ over
$\prod_{i\not=j}^m\Delta^{n_i}\times \Delta^{n_j-1}(v)$ where
$\Delta^{n_j-1}(v)$ is the facet
of $\Delta^{n_j}$ not containing $v$.  Since $M(j)$ and $M_j$
have no intersection, the restriction of $y_j$ to $M(j)$ vanishes.

We know that
\begin{equation} \label{HM}
H^*(M)=\Z[y_1,\dots,y_m]/(g_1,\dots,g_m).
\end{equation}
where $g_k$ is the polynomial in (\ref{yrela}).
Since $y_j$ maps to zero in $H^*(M(j))$ and $g_j$ contains $y_j$ as
a factor, we have a natural surjective map
\[
\Z[y_1,\dots,\widehat{y_j},\dots,y_m]/(g_1',
\dots,\widehat{g_j'},\dots,g_m')
\to H^*(M(j)).
\]
where $g_k'$ denotes $g_k$ with $y_j=0$ plugged in and $\widehat{\ }$ denotes
the term there is dropped.
The degree of $g_k'$ for $k\not=j$ is $n_k+1$ and
$g_k'$ contains the term $y_k^{n_k+1}$.
Therefore, the ranks of the both sides above agree, so that the
map is an isomorphism.  This proves the lemma.
\end{proof}

\begin{lemma} \label{H2nonzero}
Let $N$ be the smallest number among $n_i$'s.
If the vector matrix associated with $M$ is of the form (\ref{matrixb})
in Lemma~\ref{matrix}, then
there is no non-zero element in $H^2(M)$ whose $(N+1)$-st power vanishes.
\end{lemma}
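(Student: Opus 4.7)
The plan is to use the explicit form of $H^*(M)$ given by Lemma~\ref{lemma:cohomology ring of quasitoric and small cover} and derive a contradiction by comparing coefficients in the polynomial ring. For the vector matrix $A$ of the form (\ref{matrixb}), let $s_k$ denote the number of non-zero components of $\mathbf b^k$ and $b^k$ their common value; the cohomology relations then take the explicit form
\[
g_k \;=\; y_k^{n_k-s_k+1}(y_k + b^k y_{k-1})^{s_k}, \qquad k = 1,\dots,m,
\]
with indices taken cyclically modulo $m$, and with $\prod_k b^k = (-1)^m\cdot 2$. The essential structural feature is that each $g_k$ involves only the two consecutive variables $y_k$ and $y_{k-1}$, which is precisely the cyclic sparsity of (\ref{matrixb}).

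Now suppose $x = \sum_k \alpha_k y_k \in H^2(M)$ satisfies $x^{N+1} = 0$. Since $\deg g_k = n_k + 1 \ge N+1$ and $N = \min_k n_k$, writing $x^{N+1}$ as a polynomial combination of the $g_k$'s in $\Z[y_1,\dots,y_m]$ forces
\[
x^{N+1} \;=\; \sum_{k\,:\,n_k = N} c_k\, g_k
\]
for scalars $c_k \in \Z$. I would then compare coefficients of the monomials $y_k^{N+1-i}y_{k-1}^i$ on both sides. By the sparsity noted above, only $g_k$ itself contributes such a monomial, so the comparisons yield clean equations. The case $i=0$ yields $c_k = \alpha_k^{N+1}$ when $n_k = N$ and $\alpha_k = 0$ when $n_k > N$, which recovers Lemma~\ref{lamma:alpha_{ij}is not zero}.

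Assume $x \ne 0$. Pick any $k$ with $\alpha_k \ne 0$; then necessarily $n_k = N$, and the $i=1$ comparison gives $(N+1)\alpha_{k-1} = \alpha_k s_k b^k$. Since the right-hand side is non-zero, so is $\alpha_{k-1}$, and then $n_{k-1} = N$. Iterating around the cycle forces every $\alpha_k$ to be non-zero and every $n_k$ to equal $N$. To finish, I split into two cases. If some $s_k \ge 2$, then the $i=2$ comparison combined with the square of the $i=1$ relation collapses algebraically to $s_k = N+1$, contradicting $s_k \le n_k = N$. Otherwise $s_k = 1$ for every $k$, and multiplying the $m$ copies of the $i=1$ relation around the cycle yields $(N+1)^m = \prod_k b^k = (-1)^m\cdot 2$, which is impossible for $m\ge 2$ and $N\ge 1$ since the left-hand side is at least $4$.

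The main subtle point is the reduction $x^{N+1} = \sum_{n_k = N} c_k g_k$ together with the observation that only $g_k$ itself contributes to the monomial $y_k^{N+1-i}y_{k-1}^i$; once this cyclic-sparsity structural input has been isolated, the remainder is elementary binomial-coefficient algebra that cleanly detects the numerical anomaly $\prod_k b^k = (-1)^m\cdot 2$ characteristic of (\ref{matrixb}).
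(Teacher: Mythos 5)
Your overall strategy is the same as the paper's: express $x^{N+1}$ as $\sum_{n_k=N} c_k g_k$ where $g_k = y_k^{n_k-s_k+1}(y_k+b^k y_{k-1})^{s_k}$, determine the $c_k$'s, and compare monomial coefficients to detect the anomaly $\prod_k b^k = (-1)^m\cdot 2$. The organization differs somewhat: you iterate the $y_k^N y_{k-1}$ comparison around the cycle to force all $\alpha_k\neq 0$ and all $n_k = N$, then split on $s_k\geq 2$ versus all $s_k=1$; the paper splits on $N=1$ versus $N\geq 2$ and, in the latter case, argues that at most two $\alpha_k$'s can be non-zero since each monomial on the right of the identity involves at most two variables.

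There is, however, a genuine gap in your cyclic-sparsity claim. You assert that ``only $g_k$ itself contributes to the monomial $y_k^{N+1-i}y_{k-1}^i$,'' which holds for $m\geq 3$ but fails when $m=2$: in that case $k-1\equiv k+1\pmod{2}$, so $g_{k-1}$ also involves precisely the pair $\{y_k,y_{k-1}\}$ and can contribute to the same monomial whenever $s_{k-1}$ is large enough. Concretely, for $m=2$, $N=1$ (so necessarily $s_1=s_2=1$) the true $i=1$ comparison is $2\alpha_1\alpha_2 = \alpha_1^2 b^1 + \alpha_2^2 b^2$, not the clean $(N+1)\alpha_{k-1}=\alpha_k s_k b^k$ on which your cycle-multiplication step relies; and for $m=2$, $N\geq 2$ with some $s_k\geq N-1$ the $i=2$ comparison likewise picks up a cross-term from $g_{k-1}$, so the collapse to $s_k=N+1$ is not justified. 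The paper explicitly isolates the exceptional case $m=2$, $n_1=n_2=N$ and disposes of it by a substitution ($y_2\mapsto -b_2 y_1$, using $b_1b_2=2$ to make one relation vanish and the other become $\pm y_1^{N+1}$); your argument contains no analogue of this and so does not cover $m=2$. The rest of the argument — the reduction to $g_k$ with $n_k=N$, the determination $c_k=\alpha_k^{N+1}$, the iteration forcing all $\alpha_k\neq 0$, and the $m\geq 3$ cases — is correct.
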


\begin{proof}
Let $y$ be an element of $H^2(M)$ whose $(N+1)$-st power vanishes.
Since $N$ is smallest among $n_i$'s,
$y$ can be expressed as a linear combination of the canonical generators
$y_i$'s with $n_i=N$
by Lemma~\ref{lamma:alpha_{ij}is not zero},
say $y=\sum_{n_i=N}{a_iy_i}$ with $a_i\in \Z$.
All relations in $H^*(M)$ of cohomological degree
$2(N+1)$ are generated by $y_i^{k_i+1}(y_i+b_{i}y_{i-1})^{n_i-k_i}$'s
with $n_i=N$ over $\Z$, where
$y_{i-1}$ with $i=1$ is understood to be $y_m$, $b_i$ is the non-zero
component of the vector $\mathbf b_i$ in Lemma~\ref{matrix}
and $k_i$ is the number of zero components of $\mathbf b_i$.
Note that $k_i<N$ when
$n_i=N$ since $\mathbf b_i$ is non-zero.  It follows that
we obtain a polynomial identity
\begin{equation} \label{excep}
(\sum_{n_i=N}{a_iy_i})^{N+1}=\sum_{n_i=N}a_i^{N+1}y_i^{k_i+1}
(y_i+b_{i}y_{i-1})^{N-k_i}.
\end{equation}

{\it Case 1.} The case where $N=1$. In this case $k_i=0$ for $i$ with
$n_i=N=1$.  Suppose that
$a_i$ is non-zero for some $i$ with $n_i=1$.  Comparing the coefficients
of $y_i^2$ and $y_iy_{i-1}$
at both sides of the identity (\ref{excep}) with an observation that the
right-hand side of (\ref{excep}) contains a $y_{i}y_{i-1}$-term,
we see that $n_{i-1}=1$ and $2a_ia_{i-1}=a_i^2b_i$.
Since $a_i$ and $b_i$ are both non-zero, this shows that $a_{i-1}$ is also
non-zero and $2a_{i-1}=a_ib_i$.
Since $n_{i-1}=1$ and $a_{i-1}$ is non-zero, the same argument
can be applied to $i-1$ instead of $i$. Repeating this argument,
we see that $n_i=1$ and $2a_{i-1}=a_ib_i$ for any $i$.
It follows that $\prod_{i=1}^m {b_i}=2^m$ which
contradicts the fact that $\prod_{i=1}^m b_i=(-1)^m2$ in Lemma~\ref{matrix}.

{\it Case 2.} The case where $N\ge 2$.
When we expand the right hand side of the identity (\ref{excep}), no
monomial in more than two variables appears.  Since $N\ge 2$, this implies
that at most two coefficients among $a_i$'s are non-zero.
Since all $b_i$'s are non-zero, it easily follows from (\ref{excep}) that
the case where only one coefficient among $a_i$'s is non-zero does not
occur.

Suppose that there
are exactly two non-zero coefficients, say $a_i$ and $a_j$.
Then only two variables appear at the left hand side.   Unless $m=2$ and
$n_1=n_2=N$, at least three variables
appear at the right hand side of (\ref{excep}) which is a contradiction.
If $m=2$ and $n_1=n_2=N$, then the identity (\ref{excep}) is
\[
(a_1y_1+a_2y_2)^{N+1}=a_1^{N+1}y_1^{k_1+1}(y_1+b_1y_2)^{N-k_1}+
a_2^{N+1}y_2^{k_2+1}(y_2+b_2y_1)^{N-k_2}.
\]
Replacing $y_2$ by $-b_2y_1$ above, we obtain an identity
$$|a_1-a_2b_2|^{N+1}=|a_1|^{N+1}$$
where we used the fact $b_1b_2=2$ in
Lemma~\ref{matrix}.
Since $a_2b_2\not=0$, it follows from the identity above that $2a_1=a_2b_2$.
Similarly, replacing $y_1$ by $-b_1y_2$ above,
we obtain $2a_2=a_1b_1$.  These two
identities imply that $b_1b_2=4$ which contradicts to $b_1b_2=2$.

This completes the proof of the lemma.
\end{proof}

\section{Cohomologically product quasitoric manifolds}

We say that a quasitoric manifold $M$
over $\prod_{i=1}^m\Delta^{n_i}$ is \emph{cohomologically product} over
$\Q$ if there are elements $x_1,\dots,x_m$ in $H^2(M;\Q)$ such that
\begin{equation}\label{eq:cohomology of quasitor which is product of CP^n}
H^\ast(M;\Q)=\Q[x_1,\ldots,x_m]/(x_1^{n_1+1},\dots, x_m^{n_m+1}).
\end{equation}
The purpose of this section is to prove the following.

\begin{theorem} \label{cotri}
If a quasitoric manifold $M$ over $\prod_{i=1}^m\Delta^{n_i}$ is
cohomologically
product over $\Q$, then the vector matrix associated with $M$ is conjugate to
a unipotent upper triangular vector matrix, so that $M$ is homeomorphic to
a \higher Bott manifold.
\end{theorem}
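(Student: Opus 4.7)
The plan is to argue by induction on $m$. The base case $m=1$ is immediate since any quasitoric manifold over $\Delta^{n_1}$ is equivalent to $\mathbb{C}P^{n_1}$, a \higher Bott manifold. For the inductive step, my strategy is to verify the hypothesis of Lemma~\ref{matrix} --- that all proper principal minors of $A$ equal $1$ --- and then to rule out the exceptional form (\ref{matrixb}) using Lemma~\ref{H2nonzero}.

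To verify the hypothesis, I fix $j\in\{1,\dots,m\}$ and consider the facial submanifold $M(j)$ over $\prod_{i\neq j}\Delta^{n_i}$, whose associated vector matrix is the principal $(m-1)\times(m-1)$ submatrix of $A$ obtained by deleting row and column $j$. By Lemma~\ref{restr}, $H^*(M(j);\Q)=H^*(M;\Q)/(y_j)$. The key sub-claim is that $M(j)$ is itself cohomologically product over $\Q$. Granting this, the inductive hypothesis yields that $M(j)$ is equivalent to a \higher Bott manifold, so by Theorem~\ref{theorem:complex Bott manifold} all principal minors of its associated submatrix equal $1$; since every proper principal minor of $A$ appears as such a minor for some $j$, the hypothesis of Lemma~\ref{matrix} follows.

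To establish the sub-claim, I would first use Lemma~\ref{lamma:alpha_{ij}is not zero} to show that, after sorting the indices so that $n_1\geq n_2\geq\dots\geq n_m$, the change of basis between the cohomological-product generators $(x_i)$ and the canonical generators $(y_k)$ is block upper triangular, with blocks indexed by groups of equal $n$-value (indeed, if $c_{ij}\neq 0$ in $x_i=\sum_k c_{ik}y_k$ and $n_j>n_i$ then $x_i^{n_j}\neq 0$ contradicts $x_i^{n_i+1}=0$). Invertibility of the $j$-th diagonal block then supplies an index $i$ with $n_i=n_j$ and $d_{ji}\neq 0$ in $y_j=\sum_k d_{jk}x_k$. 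Solving $y_j=0$ for $x_i$ and substituting back, the quotient $H^*(M;\Q)/(y_j)$ is presented as $\Q[x_k]_{k\neq i}$ modulo the truncations $(x_k^{n_k+1})_{k\neq i}$ together with the single extra relation $(\sum_{k\neq i}d_{jk}x_k)^{n_i+1}=0$ inherited from $x_i^{n_i+1}=0$. On the other hand, the Betti numbers of the quasitoric manifold $M(j)$ are determined by the polytope $\prod_{k\neq j}\Delta^{n_k}$, so its total rank is $\prod_{k\neq j}(n_k+1)$, which equals $\prod_{k\neq i}(n_k+1)$ because $n_i=n_j$. This matches the rank of the quotient \emph{before} the extra relation is imposed, forcing the extra relation to lie already in the ideal of truncations. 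Hence $H^*(M(j);\Q)=\Q[x_k]_{k\neq i}/((x_k^{n_k+1})_{k\neq i})$ and $M(j)$ is cohomologically product, as claimed.

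With the hypothesis of Lemma~\ref{matrix} verified, $A$ is conjugate either to the unipotent upper triangular form (\ref{unipotent}) or to form (\ref{matrixb}). If the latter occurred, Lemma~\ref{H2nonzero} would imply that no nonzero element of $H^2(M)$ has vanishing $(N+1)$-st power, where $N=\min_k n_k$; but taking $i$ with $n_i=N$, the class $x_i\in H^2(M;\Q)$ is nonzero and satisfies $x_i^{N+1}=x_i^{n_i+1}=0$, a contradiction. Therefore $A$ is conjugate to (\ref{unipotent}), and Proposition~\ref{theorem:small covers are Bott manifolds} gives that $M$ is equivalent, and hence homeomorphic, to a \higher Bott manifold. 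The subtlest step is the rank-counting argument inside the sub-claim: the rigidity imposed on the characteristic function by Lemma~\ref{lamma:alpha_{ij}is not zero} must be combined with the polytope-determined Betti numbers of the facial submanifold $M(j)$ to force each $y_j$ to have the sparse form needed for the product structure to descend under the restriction map.
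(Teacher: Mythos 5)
Your proof is correct and follows essentially the same path as the paper's: induction on $m$, showing each facial submanifold $M(j)$ is cohomologically product over $\Q$ so that the inductive hypothesis kills all proper principal minors, then invoking Lemma~\ref{matrix} and ruling out form (\ref{matrixb}) via Lemma~\ref{H2nonzero}. The only cosmetic difference is that the paper factors the sub-claim through Lemmas~\ref{cii} and~\ref{cpror} (doing a global reindexing to arrange $c_{jj}\neq 0$ and then eliminating $x_j$), whereas you keep $j$ fixed and eliminate a different variable $x_i$ from the same block with $n_i=n_j$; both rest on the same block-triangularity coming from Lemma~\ref{lamma:alpha_{ij}is not zero} and the same rank count.
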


\begin{remark}
   We prove in \cite{ch-ma-su07} that if a generalized Bott manifold is cohomologically trivial over $\Z$, then it is diffeomorphic to a product of complex projective spaces. This together with Theorem \ref{cotri} implies that if a quasitoric manifold over a product of simplices is cohomologically trivial over $\Z$, then it is homeomorphic to a product of complex projective spaces.
\end{remark}

In the following $M$ is assumed to be cohomologically product over $\Q$.
We have another set of generators $\{y_1,\dots,y_m\}$
in Lemma~\ref{lemma:cohomology ring of quasitoric and small cover}.
Since both $\{x_1,\ldots,x_m\}$ and $\{y_1,\ldots, y_m\}$ are sets of
generators of $H^2(M;\Q)$, one can write
\begin{equation} \label{matrixC}
y_j=\sum_{i=1}^m c_{ji}x_i \qquad\textrm{ for }j=1,\ldots, m\textrm{\  and  }
c_{ji}\in\Q,
\end{equation}
where the coefficient matrix $C=(c_{ji})$ has non-zero
determinant.

\begin{lemma} \label{cii}
By an appropriate change of indices in $x_i$'s and $y_j$'s,
we may assume that $c_{jj}\not=0$ for any $j=1,\dots,m$.
\end{lemma}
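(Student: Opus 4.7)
The plan is to exploit the fact that $C=(c_{ji})$ is invertible. Since both $\{x_1,\dots,x_m\}$ and $\{y_1,\dots,y_m\}$ are bases of the $\Q$-vector space $H^2(M;\Q)$, the change-of-basis matrix $C$ satisfies $\det C\ne 0$, as is already noted in the paragraph containing \eqref{matrixC}.

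The Leibniz expansion
\[
\det C=\sum_{\sigma\in S_m}\mathrm{sgn}(\sigma)\prod_{j=1}^m c_{j,\sigma(j)}
\]
therefore has at least one non-zero summand, so I obtain a permutation $\sigma\in S_m$ with $c_{j,\sigma(j)}\ne 0$ for every $j=1,\dots,m$.

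I would then relabel the generators by $\sigma$: replace each $x_i$ by $x_{\sigma(i)}$, which permutes the columns of $C$ and produces a new coefficient matrix whose diagonal entries are $c_{j,\sigma(j)}\ne 0$. Under this relabeling the presentation of $H^*(M;\Q)$ becomes $\Q[x_{\sigma(1)},\dots,x_{\sigma(m)}]/\bigl(x_{\sigma(1)}^{n_{\sigma(1)}+1},\dots,x_{\sigma(m)}^{n_{\sigma(m)}+1}\bigr)$, so the exponents $n_i$ get simultaneously permuted by $\sigma$. This may be matched by the corresponding permutation of the factors of the polytope $P=\prod_{i=1}^m\Delta^{n_i}$, and hence of the canonical generators $y_j$; as discussed earlier, such a permutation of factors merely conjugates the associated vector matrix and does not affect the equivalence class of $M$.

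The argument is entirely formal linear algebra, and I do not foresee any serious obstacle. The only point requiring care is to verify that the column permutation on $C$ is compatible with the simultaneous permutation of the exponents $n_i$ and of the $y_j$'s, which is precisely the reason the lemma allows reindexing of both $\{x_i\}$ and $\{y_j\}$.
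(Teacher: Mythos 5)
Your argument has a genuine gap: the Leibniz permutation $\sigma$ you extract from $\det C\neq 0$ need not preserve the exponents, i.e.\ it need not satisfy $n_{\sigma(j)}=n_j$, and without this the reindexing is not legitimate. Concretely, relabeling $x_i\mapsto x_{\sigma(i)}$ converts the relation $x_i^{n_i+1}=0$ into $\tilde x_i^{n_{\sigma(i)}+1}=0$; this is a presentation of the form required by \eqref{eq:cohomology of quasitor which is product of CP^n} (with the same polytope $\prod_i\Delta^{n_i}$) only when $n_{\sigma(i)}=n_i$ for every $i$. This matters downstream: in the proof of Lemma~\ref{cpror} one eliminates $x_j$ using $c_{jj}\neq 0$ and then compares $\Q$-dimensions, $\prod_{i\neq j}(n_{\sigma(i)}+1)$ against $\prod_{i\neq j}(n_i+1)$; these agree precisely when $n_{\sigma(j)}=n_j$. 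Moreover, your proposed remedy---simultaneously permuting the polytope factors, and hence the $y_j$'s, by the same $\sigma$---defeats the purpose: if both sets of generators are permuted by $\sigma$, the new coefficient matrix is $E_\sigma C E_\sigma^{-1}$, whose diagonal entries are $c_{\sigma(j)\sigma(j)}$, merely a reordering of the original (possibly vanishing) diagonal. So after your ``fix'' you are back where you started.

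What your argument is missing is the block-triangular structure of $C$, which is exactly what forces a good $\sigma$ to exist. From Lemma~\ref{lamma:alpha_{ij}is not zero} one gets $d_{ij}=0$ whenever $n_i<n_j$ (where $D=(d_{ij})$ is the inverse change of basis $x_i=\sum_j d_{ij}y_j$), so after sorting $n_1\geq\dots\geq n_m$ the matrix $D$, and hence $C=D^{-1}$, is block upper triangular with blocks indexed by the distinct values of $n_i$. Then $\det C=\prod_\ell \det C_{J_\ell}\neq 0$ lets one choose a Leibniz permutation separately within each block, which automatically preserves the $n$-values. (Equivalently: block triangularity gives $c_{ji}=0$ whenever $n_j<n_i$, so any $\sigma$ with all $c_{j\sigma(j)}\neq 0$ satisfies $n_j\geq n_{\sigma(j)}$ for every $j$, and summing over $j$ forces equality.) That appeal to Lemma~\ref{lamma:alpha_{ij}is not zero} is the key idea, and it is absent from your proposal.
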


\begin{proof}
We may assume that $n_1\ge n_2\ge \ldots\ge n_m$ by an appropriate change
of indices.   Let $S=\{N_1,\ldots, N_k\}$ be the set of all
distinct elements of $n_1,\ldots,n_m$ such that $N_1>\ldots>N_k$.
We can view $\{n_1,\ldots,n_m\}$ as a function
$\mu:\{1,\ldots,m\}\to \mathbb N$ such that $\mu(j)=n_j$. Then $S$
is the image of $\mu$. Let $J_\ell=\mu^{-1}(N_\ell)$ for $\ell=1,\ldots, k$.
We write
\begin{equation}\label{eq:block form of relation between x and y}
x_i=\sum_{j=1}^md_{ij}y_j
\qquad\textrm{ for }i=1,\ldots, m\textrm{\  and  }d_{ij}\in\Q.
\end{equation}
Since $x_i^{n_i+1}=0$,
$d_{ij}=0$ if $n_i< n_j$ by Lemma~\ref{lamma:alpha_{ij}is not zero}.
This shows that $D=(d_{ij})$ is a block upper triangular matrix
because we assume $n_1\ge n_2\ge \ldots\ge n_m$.
The matrix $C$ in (\ref{matrixC}) is the inverse of the matrix $D$, so
$C$ is also a block upper triangular matrix and of the same type as $D$,
i.e.,
$$C=\left(
\begin{array}{cccc} C_{J_1} & && \ast \\ & C_{J_2} & & \\ & & \ddots& \\ 0
&&&C_{J_k}
\end{array}\right)
$$
where $C_{J_\ell}$ $(\ell=1,\dots,k)$ is a square matrix formed from
$c_{ij}$ with $i,j\in J_\ell$.
Since $\det C\not=0$, we have $\det C_{J_\ell}\not=0$ for any $\ell$.
By definition of determinant $\det
C_{J_\ell}=\sum_{\sigma}{\sgn\sigma}\prod_{j\in J_{\ell}}c_{j\sigma(j)}$
where the sum is taken over all permutations $\sigma$ on $J_\ell$.
Therefore there must exist a permutation $\sigma$ on $J_\ell$ such that
$\prod_{j\in J_\ell}c_{j\sigma(j)}\ne 0$.
This implies the lemma.
\end{proof}

\begin{lemma} \label{cpror}
The facial submanifold $M(j)$ of $M$ over $\prod_{i\not=j}^m\Delta^{n_i}$
is also cohomologically product over $\Q$ for any $j$.
\end{lemma}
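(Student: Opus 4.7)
The plan is to exhibit $m-1$ degree-two classes on $M(j)$ that satisfy the product-of-projective-spaces relations, and to finish by a dimension count. First, by Lemma~\ref{cii}, we may reindex once and for all so that $c_{kk}\ne 0$ for every $k=1,\dots,m$. Write $\tilde x_i:=x_i|_{M(j)}\in H^2(M(j);\Q)$ for each $i$. By Lemma~\ref{restr}, the restriction map $H^*(M;\Q)\to H^*(M(j);\Q)$ sends $y_j$ to $0$, and it is surjective because $H^*(M(j);\Q)$ is generated by the images of the $y_k$ with $k\ne j$. Restricting the linear relation~\eqref{matrixC} for $y_j$ to $M(j)$ yields
\begin{equation*}
0=y_j|_{M(j)}=\sum_{i=1}^m c_{ji}\tilde x_i,
\end{equation*}
and since $c_{jj}\ne 0$ we can solve for $\tilde x_j$ as a $\Q$-linear combination of $\{\tilde x_i:i\ne j\}$. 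Consequently $\{\tilde x_i:i\ne j\}$ already generates $H^*(M(j);\Q)$ as a $\Q$-algebra.

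Next, the relations $x_i^{n_i+1}=0$ in $H^*(M;\Q)$ restrict to $\tilde x_i^{n_i+1}=0$ for every $i\ne j$. This produces a surjective graded $\Q$-algebra homomorphism
\begin{equation*}
\phi\colon \Q[X_i:i\ne j]\big/(X_i^{n_i+1}:i\ne j)\twoheadrightarrow H^*(M(j);\Q),\qquad X_i\mapsto \tilde x_i.
\end{equation*}
The source has $\Q$-dimension $\prod_{i\ne j}(n_i+1)$. On the other hand, Lemma~\ref{restr} presents $H^*(M(j);\Q)$ as a quotient of $\Q[y_k:k\ne j]$ by $m-1$ polynomials $g_k'$ of degree $n_k+1$ each containing the top monomial $y_k^{n_k+1}$, and the rank computation inside that proof (together with $M(j)$ being a quasitoric manifold over $\prod_{i\ne j}\Delta^{n_i}$) gives $\dim_\Q H^*(M(j);\Q)=\prod_{i\ne j}(n_i+1)$. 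Matching dimensions forces $\phi$ to be an isomorphism, proving that $M(j)$ is cohomologically product over $\Q$.

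The only real subtlety is arranging $c_{jj}\ne 0$, which is exactly what Lemma~\ref{cii} secures after a harmless reindexing of the simplices (which only renames the problem). Once this is in place, the rest is formal: the linear relation puts $\tilde x_j$ inside the subalgebra generated by the other $\tilde x_i$, and the dimension count closes the argument.
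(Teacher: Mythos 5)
Your argument is correct and follows essentially the same route as the paper's: restrict the $x_i$ to $M(j)$, use $y_j|_{M(j)}=0$ together with $c_{jj}\ne 0$ (from Lemma~\ref{cii}) to eliminate $\tilde x_j$, and then compare $\Q$-dimensions to upgrade the surjection to an isomorphism. The only cosmetic difference is that you make the surjectivity of the restriction map and the elimination of $\tilde x_j$ explicit, whereas the paper phrases the same step as ``one can eliminate the variable $x_j$.''
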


\begin{proof}
Since $H^*(M(j))$ is $H^*(M)$ with $y_j=0$ plugged by Lemma~\ref{restr},
it follows from (\ref{matrixC}) that
\[
H^*(M(j);\Q)=\Q[x_1,\dots,x_m]/(x_1^{n_1+1},\dots,x_m^{n_m+1},
\sum_{i=1}^mc_{ji}x_i).
\]
Here $c_{jj}\not=0$ by Lemma~\ref{cii}, so that one can
eliminate the variable $x_j$ using the
relation $\sum_{i=1}^m c_{ji}x_i=0$.  Therefore a natural map
\[
\Q[x_1,\dots,\widehat{x_j},\dots,x_m]/
(x_1^{n_1+1},\dots,\widehat{x_j^{n_r+1}},\dots,x_m^{n_m+1})
\to H^*(M(j);\Q)
\]
is surjective.  Since the dimensions at the both sides above are same,
this map is actually an isomorphism, proving the lemma.
\end{proof}

Now we shall prove Theorem~\ref{cotri} by induction on the number $m$ of
factors in $\prod_{i=1}^m\Delta^{n_i}$.
Suppose that $M$ is cohomologically product over $\Q$.
Then any facial submanifold $M(j)$ is cohomologically product over $\Q$
by Lemma~\ref{cpror}. Therefore by induction assumption
all the proper principal
minors of the vector matrix $A$ associated with $M$ are $1$.
It follows that the vector matrix $A$ is conjugate
to a unipotent upper triangular vector matrix or
to a matrix of the form (\ref{matrixb}) in Lemma~\ref{matrix}.
But the latter does not occur because since
$M$ is cohomologically product over $\Q$,
$H^2(M)$ must contain a non-zero element
whose $(N+1)$-st power vanishes, where $N$ is the smallest number
among $n_j$'s, but this fact contradicts Lemma~\ref{H2nonzero}.
This proves Theorem~\ref{cotri}.

\end{document}